\numberwithin{equation}{section}
\newtheorem{theorem}{Theorem}[section]
\newtheorem{corollary}[theorem]{Corollary}
\newtheorem{proposition}[theorem]{Proposition}
\newtheorem{lemma}[theorem]{Lemma}
\newtheorem{remark}[theorem]{Remark}
\theoremstyle{definition}
\newtheorem{definition}[theorem]{Definition}
\newtheorem{problem}{Problem}
\theoremstyle{plain}
\def\R{\mathbb R}
\DeclareMathOperator*{\supp}{\rm supp}
\DeclareMathOperator{\ldim}{ldim}
\def\cont{\mathfrak c}
\def\hull#1{\langle#1\rangle}
\begin{document}

\title[Products of topological groups]
{Products of topological groups in which all closed subgroups are separable}
\author[Arkady G.~Leiderman and Mikhail G.~Tkachenko]{Arkady G.~Leiderman$\,^{\star,1}$ and Mikhail G.~Tkachenko$\,^{\star,2}$}
\address{(A.~Leiderman) Department of Mathematics, Ben-Gurion University of the Negev, Beer Sheva, P.O.B. 653, Israel}
\email{arkady@math.bgu.ac.il}
\address{(M.~Tkachenko) Departamento de Matem\'aticas, Universidad Aut\'onoma Metropolitana, 
Av. San Rafael Atlixco 186, Col. Vicentina, Del. Iztapalapa, C.P. 09340, Mexico City, Mexico}
\email{mich@xanum.uam.mx}

\thanks{$^\star\,$The authors have been supported by CONACyT of Mexico, grant number 
CB-2012-01 178103.}
\thanks{$^1\,$The first listed author gratefully acknowledges the financial support received 
from the Universidad Aut\'onoma Metropolitana during his visit to Mexico City in September, 2016.} 
\thanks{$^2\,$Corresponding author}
\keywords{Topological group, closed subgroup, locally convex space, separable, pseudocompact, pseudocomplete}
\subjclass[2010]{Primary 54D65; Secondary 22A05, 46A03}


\dedicatory{To the memory of Wistar Comfort (1933--2016),\\ a great topologist and man, 
to whom we owe much of our inspiration}

\date{December 26, 2016}

\begin{abstract}
We prove that if $H$ is a topological group such that all closed subgroups of $H$ 
are separable, then the product $G\times H$ has the same property for every 
separable compact group $G$.

Let $\cont$ be the cardinality of the continuum. Assuming $2^{\omega_1} = \cont$, 
we show that there exist: 
\begin{itemize}
\item pseudocompact topological abelian groups $G$ and $H$ such that all closed 
subgroups of $G$ and $H$ are separable, but the product $G\times H$ contains a 
closed non-separable $\sigma$-compact subgroup;
\item pseudocomplete locally convex vector spaces $K$ and $L$ such that all closed 
vector subspaces of $K$ and $L$ are separable, but the product $K\times L$ contains a
closed non-separable $\sigma$-compact vector subspace.
\end{itemize}
\end{abstract}

\maketitle

\section{Introduction}

All topological groups and locally convex linear spaces are assumed to be Hausdorff.
The \textit{weight} of a topological space $X$, denoted by $w(X)$, is the smallest 
size of a base for $X$. A space $X$ is \emph{separable} if it contains a dense 
countable subset. If every subspace of a topological space $X$ is separable, then 
$X$ is called \emph{hereditarily separable}. Hereditary separability is not a productive 
property\,---\,the Sorgenfrey line is an example of a hereditarily separable paratopological 
group whose square contains a closed discrete subgroup of cardinality $\cont$ (see 
\cite[2.3.12]{Eng89} or \cite[5.2.e]{AT}). Nevertheless, as we observe in Proposition~\ref{Pro:HS}, 
the product of any hereditarily separable topological space with a separable metrizable 
space is hereditarily separable.

Our main objective is to study products of two topological groups having the following 
property: Every \emph{closed subgroup} of a group is separable. Since this property does 
not imply the separability of every subspace of a group, Proposition~\ref{Pro:HS} has very 
limited applicability for our purposes. 

It is known that a closed subgroup of a separable topological group is not necessarily separable.
However, W. Comfort and G. Itzkowitz proved in \cite{ComItz} that all closed subgroups of a 
separable locally compact topological group are separable. It was also noticed by several 
authors independently that every metrizable subgroup of a separable topological group is 
separable (see  \cite{Lohman}).

 Recently these results have been generalized in \cite{LMT} as follows: Every feathered 
 subgroup of a separable topological group is separable. We recall that a topological 
group $G$ is \emph{feathered} if it contains  a compact subgroup $K$ such that the quotient 
space $G/K$ is metrizable (see \cite[Section~4.3]{AT}). All locally compact and all metrizable 
groups are feathered.

Since the class of feathered groups is closed under countable products and taking closed 
subgroups, we obtain the following simple corollary.

\begin{proposition}\label{Pr:feathered}
Let $G$ be a separable locally compact group and $H$ be a separable feathered group.
Then every closed subgroup of the product $G\times H$ is separable.  
\end{proposition}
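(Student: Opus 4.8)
The plan is to leverage the two facts stated just before the proposition: (i) the class of feathered groups is closed under countable products and under taking closed subgroups, and (ii) the theorem from \cite{LMT} that every feathered subgroup of a separable topological group is separable. Since $G$ is locally compact and $H$ is feathered, and locally compact groups are feathered, both factors are feathered; hence by the first fact the product $G\times H$ is feathered. I would record this as the opening step.

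Next I would establish that $G\times H$ is separable. This is immediate: the product of two separable spaces is separable, so $G\times H$ is a separable feathered group.

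The final step is the heart of the argument, though it is still short. Let $N$ be an arbitrary closed subgroup of $G\times H$. Because a closed subgroup of a feathered group is feathered (again by the first fact), $N$ is a feathered group. Moreover $N$ is a feathered \emph{subgroup} of the separable group $G\times H$, so the \cite{LMT} result applies directly and yields that $N$ is separable. Since $N$ was an arbitrary closed subgroup, this proves the proposition.

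I do not anticipate a genuine obstacle here, since the proposition is explicitly flagged in the text as a \emph{simple corollary}; the only point requiring a moment's care is making sure all the hypotheses of the \cite{LMT} theorem are in place, namely that $N$ is simultaneously feathered and a closed (hence topological) subgroup of a separable group. Once both closure properties of the feathered class are invoked, the generalized Comfort--Itzkowitz result does all the work.
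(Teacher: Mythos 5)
Your proof is correct and is exactly the argument the paper intends: the proposition is stated as a direct consequence of the facts that locally compact groups are feathered, that the feathered class is closed under (countable) products and closed subgroups, and that by \cite{LMT} every feathered subgroup of a separable group is separable. The paper leaves this reasoning implicit, and your write-up simply makes the same chain of deductions explicit, with all hypotheses of the \cite{LMT} theorem properly checked.
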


Let us say that a topological group $G$ is {\it strongly separable} (briefly, \emph{$S$-separable}),
if for any topological group $H$ such that every closed subgroup of $H$ is separable, 
the product $G\times H$ has the same property. 

The following open problem arises naturally.

\begin{problem}\label{Prob:LL} 
Find out the frontiers of the class of $S$-separable topological groups:
\begin{enumerate}
   \item[(a)] Is every separable locally compact group $S$-separable?
   \item[(b)] Is the group of reals $\mathbb{R}$ $S$-separable? 
   Does there exist a separable metrizable group which is not $S$-separable?
   \item[(c)] Is the free topological group on the closed unit  interval 
          $S$-separable?
\end{enumerate} 
\end{problem}

Our Theorem~\ref{motivation} provides the positive answer to (a) of Problem~\ref{Prob:LL}
in the important case when $G$ is a separable compact group.

Then we deduce that every topological group $G$ which contains a separable compact 
subgroup $K$ such that the quotient space $G/K$ is countable, is $S$-separable.

It is reasonable to ask whether the separability of closed 
subgroups of the product $G\times H$ is determined by the same property of the factors 
$G$ and $H$, without imposing additional conditions on $G$ or $H$. We answer this 
question in the negative in Section~\ref{Sec:Groups}. 

A Tychonoff space $X$ is called \emph{pseudocompact} if every continuous real-valued 
function defined on $X$ is bounded. Assuming that $2^{\omega_1}=\cont$, we construct in  
Theorem~\ref{Th:Gr} pseudocompact topological abelian groups $G$ and $H$ such that 
all closed subgroups of $G$ and $H$ are separable, but the product $G\times H$ contains 
a closed non-separable $\sigma$-compact subgroup.

In Section~\ref{Sec:LCS} we consider the class of locally convex spaces (lcs) in which 
all closed vector subspaces are separable. The case of locally convex spaces is quite different from
topological groups, as an infinite-dimensional lcs is never locally compact or pseudocompact. 
Probably the first example of a closed (but not complete) non-separable vector subspace of 
a separable lcs was given by R.~Lohman and W.~Stiles \cite{Lohman}. The study of the products 
of topological vector spaces in which all closed vector subspaces are separable was initiated by  P.~Doma\'nski. He proved in \cite{Domanski} that if $E_i$ is a separable topological vector space whose completion is not $q$-minimal (in particular, if $E_i$ is a separable infinite-dimensional 
Banach space) for each $i\in I$, where $|I|=\cont$, then the product $\prod_{i\in I}E_i$ 
has a non-separable closed vector subspace. 

Recently this result was generalized in \cite{KLM} as follows: If each $E_i$, for $i\in I$, 
is an lcs with at least $\cont$ of the $E_i$'s not having the weak topology, then the product 
$\prod_{i\in I}E_i$ contains a closed non-separable vector subspace.

These facts prompt the following problem for lcs, similar to the questions considered 
earlier for topological groups.

\begin{problem}\label{Prob:LCS}
Do there exist locally convex spaces $K$ and $L$ such that
all closed linear subspaces of $K$ and $L$ are separable, but the product 
$K\times L$ contains a closed non-separable vector subspace?
\end{problem}

To the best of our knowledge, the product of \emph{two} lcs in which all closed vector 
subspaces are separable has not been considered in the literature yet.

 We present a result in the negative direction analogous in spirit to the aforementioned 
Theorem~\ref{Th:Gr}. Our Theorem~\ref{Th:LCS} states that under $2^{\omega_1}=\cont$, 
there exist pseudocomplete (hence, Baire) locally convex spaces $K$ and $L$ such that 
all closed vector subspaces of both $K$ and $L$ are separable, but the product $K\times L$ 
contains a closed non-separable $\sigma$-compact vector subspace.

Note that in view of our Proposition~\ref{findim} none of the factors $K, L$ in 
Theorem~\ref{Th:LCS} can be a finite-dimensional Banach space.

The question whether the assumption $2^{\omega_1}=\cont$ can be dropped in 
Theorems~\ref{Th:Gr} and~\ref{Th:LCS}  remains open (see Problem~\ref{Prob:L1}).  

\subsection{Notation and Background Results}\label{Background}
We start with the following apparently folklore result regarding products
of hereditarily separable topological spaces. The authors thank K.~Kunen who provided 
us with a short proof of the following proposition. Since we failed to find a reference to
this fact in the literature, its proof is included for the sake of completeness.  

\begin{proposition}\label{Pro:HS}
Let $X$ be a hereditarily separable space and  $Y$ a space with a countable network.
Then the product  $X\times Y$ is also hereditarily separable. 
\end{proposition}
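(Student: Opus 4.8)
The plan is to fix a countable network $\{N_k : k\in\omega\}$ for $Y$ and to prove directly that every subspace $Z\subseteq X\times Y$ is separable; this is exactly what hereditary separability of $X\times Y$ requires. Let $\pi_X\colon X\times Y\to X$ denote the projection. For each $k\in\omega$ I would form the subspace $A_k=\pi_X\bigl(Z\cap(X\times N_k)\bigr)\subseteq X$. Since $X$ is hereditarily separable, each $A_k$ is separable; fix a countable set $D_k$ dense in $A_k$. For every $x\in D_k$ there is, by definition of $A_k$, a point $y_{k,x}\in N_k$ with $(x,y_{k,x})\in Z$; choose one and set $z_{k,x}=(x,y_{k,x})$. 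The candidate dense set is then
\[
D=\{\,z_{k,x} : k\in\omega,\ x\in D_k\,\},
\]
which is countable, being a countable union of countable sets, and is contained in $Z$.

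It remains to check that $D$ is dense in $Z$. Given $(a,b)\in Z$ and a basic neighbourhood $U\times V$ of $(a,b)$ with $U$ open in $X$ and $V$ open in $Y$, I would first use the network to pick $k$ with $b\in N_k\subseteq V$. Then $(a,b)\in Z\cap(X\times N_k)$, so $a\in A_k$, whence $U\cap A_k$ is a nonempty relatively open subset of $A_k$; density of $D_k$ in $A_k$ yields $x\in D_k$ with $x\in U$. The associated point $z_{k,x}=(x,y_{k,x})$ then satisfies $x\in U$ and $y_{k,x}\in N_k\subseteq V$, so $z_{k,x}\in D\cap(U\times V)$, as desired.

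The step demanding the most care is the one where hereditary separability is genuinely needed: it is \emph{not} enough to know that $X$ itself is separable, because each $A_k$ is an arbitrary (and typically non-dense) subspace of $X$, and separability is not inherited by subspaces. Applying separability to every $A_k$ separately, and then lifting the dense sets $D_k$ back into $Z$, is the heart of the argument. The other point to keep in mind is that the members $N_k$ of a network need not be open; this costs nothing here, since the only property used is the defining one that every neighbourhood $V$ of $b$ contains some $N_k$ with $b\in N_k$, which is precisely what lets the construction match an arbitrary basic open set $U\times V$.
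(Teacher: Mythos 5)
Your proof is correct, and it takes a genuinely different route from the paper's. You give a direct construction: for an arbitrary subspace $Z\subseteq X\times Y$ you slice $Z$ along the countable network, project each slice $Z\cap(X\times N_k)$ to a subspace $A_k$ of $X$, apply hereditary separability of $X$ to get countable dense sets $D_k\subseteq A_k$, and lift these back into $Z$; the density check uses only the defining property of a network, so the fact that the $N_k$ need not be open indeed costs nothing. The paper argues instead by contradiction: it first replaces the topology of $Y$ by the finer topology generated by the network, reducing to the case where $Y$ is second countable, and then invokes Roitman's characterization of hereditary separability (no uncountable left-separated subspace), using a pigeonhole argument on the countable base to push an uncountable left-separated sequence in $X\times Y$ down to one in $X$. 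Your argument is more elementary and self-contained, requiring neither the topology-refinement trick nor the left-separated machinery; the paper's is shorter modulo the cited results and illustrates a standard $S$-space technique. Both proofs hinge on the same key point, which you correctly emphasize: hereditary (not mere) separability of $X$ is what must be applied, since the sets $A_k$ (respectively, the projected left-separated sequence) are arbitrary subspaces of $X$.
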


\begin{proof}
Let $\mathcal{N}$ be a countable network for $Y$. The space $Y$ admits a finer topology 
with a countable base\,---\,it suffices to consider the topology on $Y$ whose subbase is 
$\mathcal{N}$. Therefore we can assume that the space $Y$ itself has a countable base,
say, $\mathcal{B}$. 

Suppose for a contradiction that the product $X \times Y$ is not hereditarily separable.
Let us recall that a space is hereditarily separable iff it has no uncountable
left separated subspace (see \cite{Roitman}). Let $\{ (x_{\alpha}, y_{\alpha}): 
\alpha < \omega_1 \} $ be a left separated subspace of $X \times Y$, so
there are separating neighborhoods $\{U_{\alpha}: \alpha < \omega_1\}$ 
such that $(x_{\beta}, y_{\beta})\in U_{\beta}$ for each $\beta\in\omega_1$, 
but $(x_{\alpha}, y_{\alpha}) \notin U_{\beta}$ whenever $\alpha < \beta$.
We can assume without  loss of generality that each $U_{\alpha}$ has the form 
$A_{\alpha} \times B_{\alpha}$, where $A_{\alpha}$ is an open subset of $X$ and 
$B_{\alpha}\in\mathcal{B}$. Since $\mathcal{B}$ is countable, one can find an 
uncountable set $I\subset\omega_1$ and an element $B\in\mathcal{B}$ such that 
$B_\alpha=B$ for each $\alpha\in I$. Clearly, $y_{\alpha} \in B$ for each $\alpha\in I$. 
Take $\alpha,\beta\in I$ with $\alpha<\beta$. Then $x_{\beta} \in A_{\beta}$ and 
$x_{\alpha} \notin  A_{\beta}$\,---\,otherwise we would have $(x_\alpha,y_\alpha)\in 
A_\beta\times B=A_\beta\times B_\beta=U_\beta$. This shows that  $\{ x_{\alpha} : 
\alpha\in I\}$ is an uncountable left separated subspace of $X$. Hence $X$ is not 
hereditarily separable, thus contradicting our assumptions.
\end{proof}

Next we collect several important (mostly well-known) facts that will be applied in the sequel. 

\begin{theorem}{\rm (See \cite[Theorem~3.1]{Itz} and \cite[Corollary~2.5]{ComItz})}\label{compact_group}    
 If a compact topological group $G$ satisfies $w(G)\leq\cont$, then it is separable, 
and vice versa. Hence all closed subgroups of a separable compact group $G$ are 
separable. 
\end{theorem}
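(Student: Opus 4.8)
The plan is to reduce the stated equivalence to the cardinal identity $d(G)=\log w(G)$, valid for every infinite compact group $G$, where $\log\kappa=\min\{\lambda:2^{\lambda}\geq\kappa\}$; the two implications of the theorem correspond to the two inequalities packaged in this identity. Granting the identity, if $G$ is separable then $\log w(G)=d(G)\leq\aleph_{0}$, which unwinds to $w(G)\leq 2^{\aleph_0}=\cont$; conversely, if $w(G)\leq\cont=2^{\aleph_0}$ then $\log w(G)\leq\aleph_0$, hence $d(G)\leq\aleph_0$ and $G$ is separable.

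First I would dispose of the easy inequality $\log w(G)\leq d(G)$, equivalently $w(G)\leq 2^{d(G)}$. This holds for every topological group, compact or not, and is a standard estimate (a dense set together with a neighbourhood base at the identity generates a base, and for groups $\chi(G)\le w(G)$). Substituting $d(G)\leq\aleph_0$ yields $w(G)\leq\cont$ at once, which already gives the direction ``separable $\Rightarrow w(G)\leq\cont$'' without using compactness.

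The substantive direction is $d(G)\leq\log w(G)$, and here both compactness and the group operation are essential. Write $\kappa=w(G)$ and $\lambda=\log\kappa$. By the structure theory of compact groups (Peter--Weyl), there is a family $\{\pi_i\colon G\to M_i\}_{i<\kappa}$ of continuous surjective homomorphisms onto second-countable compact groups whose diagonal is a topological embedding of $G$ onto a closed subgroup of $\prod_{i<\kappa}M_i$. The naive attempt\,---\,apply the Hewitt--Marczewski--Pondiczery theorem to the ambient product $\prod_{i<\kappa}M_i$, whose density is at most $\lambda$ because $\kappa\leq 2^{\lambda}$ and each factor is separable\,---\,fails, since a dense subset of the product need not meet the closed subgroup $G$. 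This is precisely the pathology that makes the whole paper nontrivial, and it is the main obstacle here. The way around it is to test density on the metric quotients: for a finite $F\subseteq\kappa$ set $\pi_F=(\pi_i)_{i\in F}$ and $G_F=\pi_F(G)\subseteq\prod_{i\in F}M_i$. Then $G_F$ is a second-countable (hence separable) compact group, and, because $\pi_F$ maps $G$ onto $G_F$, a set $D\subseteq G$ is dense in $G$ if and only if $\pi_F(D)$ is dense in $G_F$ for every finite $F$. There are only $\kappa^{<\omega}=\kappa\leq 2^{\lambda}$ many such conditions, each referring to a group with a countable base, so an independent-selection argument of Hewitt--Marczewski--Pondiczery type\,---\,carried out inside $G$ via the separable quotients $G_F$ rather than via the factors $M_i$\,---\,produces a set $D\subseteq G$ with $|D|\leq\lambda$ that is dense in every $G_F$, hence dense in $G$. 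This is the content of \cite[Theorem~3.1]{Itz} and \cite[Corollary~2.5]{ComItz}. Applying it with $w(G)\leq\cont$ gives $d(G)\leq\log\cont=\aleph_0$, i.e.\ separability.

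Finally, the ``Hence'' clause is a short deduction from the equivalence just established. Let $H$ be a closed subgroup of a separable compact group $G$. As a closed subset of a compact space, $H$ is compact, and since weight is monotone under passing to subspaces, $w(H)\leq w(G)$. By the first part $w(G)\leq\cont$, so $w(H)\leq\cont$; applying the equivalence to the compact group $H$ shows that $H$ is separable, as required.
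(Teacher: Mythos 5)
The paper itself does not prove this statement: it is imported by citation from \cite{Itz} and \cite{ComItz}, so there is no internal proof to compare against, and your proposal must be judged as a self-contained argument. Its easy parts are fine: the implication ``separable $\Rightarrow w(G)\le\cont$'' is the standard fact $w(X)\le 2^{d(X)}$ for regular spaces (though your parenthetical justification is off\,---\,combining $w\le d\cdot\chi$ with $\chi\le w$ is circular; the usual argument takes the base formed by interiors of closures of subsets of a fixed dense set), and the ``Hence'' clause correctly follows from monotonicity of weight plus the hard implication applied to the closed subgroup.

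The genuine gap is in the hard implication $w(G)\le\cont\Rightarrow$ separable. Everything you actually establish there\,---\,the Peter--Weyl embedding of $G$ as a closed subgroup of a product $\prod_{i<\kappa}M_i$ of second-countable compact groups, and the criterion that $D\subseteq G$ is dense in $G$ if and only if $\pi_F(D)$ is dense in $G_F$ for every finite $F$\,---\,is correct but uses nothing beyond ``$G$ is a closed subspace of a product of second-countable compacta''; after the reduction, the group operation and the homomorphism property of the maps $\pi_F$ never reappear. The step that is supposed to finish the proof, the ``independent-selection argument of Hewitt--Marczewski--Pondiczery type carried out inside $G$,'' is asserted rather than performed, and no argument of that kind can exist at this level of generality: the HMP construction rests on the freedom to prescribe finitely many coordinates independently and fill in the remaining ones arbitrarily, which is exactly the freedom a proper closed subgroup lacks. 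Concretely, the one-point compactification $X$ of a discrete set of cardinality $\omega_1$ embeds as a closed subspace of $[0,1]^{\omega_1}$; all finite projections $\pi_F(X)$ are compact metrizable, density in $X$ is tested on these finite projections by precisely your argument, and there are only $\omega_1\le\cont$ density conditions\,---\,yet $X$ is not separable. Hence the group structure must be used in an essential way that your sketch never identifies; the proofs in the sources you and the paper cite do this through the structure theory of compact groups (reduction to the connected and profinite cases, duality and divisibility for the abelian part), not through a selection argument. Your closing sentence, that this step ``is the content of \cite[Theorem~3.1]{Itz} and \cite[Corollary~2.5]{ComItz},'' in effect concedes that the heart of the theorem is being cited rather than proven\,---\,which coincides with what the paper does, but leaves your proposal without a proof of the one implication that carries all the difficulty.
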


As usual, we equip products of topological spaces with the Tychonoff topology.
The next result about products of separable spaces follows from the classical 
Hewitt--Marczewski--Pondiczery theorem.

\begin{theorem}{\rm (See \cite[Theorem~2.3.15]{Eng89})}\label{HMP}
The product of no more than $\cont$ separable spaces is separable.
\end{theorem}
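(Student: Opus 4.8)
The plan is to give the classical Hewitt--Marczewski--Pondiczery construction, specialized to density $\omega$. Write the product as $P=\prod_{i\in I}X_i$ with $|I|\le\cont$, and for each $i$ fix a countable dense set $D_i=\{d_i^n:n\in\omega\}$ in $X_i$. The key idea is to encode the index set inside a convenient auxiliary space: since $|I|\le\cont=|\{0,1\}^\omega|$, I would fix an injection of $I$ into the Cantor space $Z=\{0,1\}^\omega$ and henceforth regard $I$ as a subset of $Z$. The point of this is that $Z$ is a zero-dimensional Hausdorff space with a \emph{countable} base $\mathcal{B}$ consisting of clopen sets; this countability is what will make the dense set we build countable, while the zero-dimensionality is what will let us separate finitely many coordinates.

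Next I would exhibit an explicit candidate for a countable dense subset $\mathcal{D}$ of $P$. For each finite system of pairwise disjoint clopen sets $B_1,\dots,B_k\in\mathcal{B}$ and each choice of indices $n_0,n_1,\dots,n_k\in\omega$, define a point $p=p(B_1,\dots,B_k;n_0,\dots,n_k)\in P$ by letting its $i$-th coordinate be $d_i^{n_j}$ if $i\in B_j$ for some (necessarily unique) $j\in\{1,\dots,k\}$, and $d_i^{n_0}$ if $i$ lies outside $B_1\cup\dots\cup B_k$. Disjointness of the $B_j$ guarantees that $p$ is well defined. Let $\mathcal{D}$ be the collection of all such points. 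Since $\mathcal{B}$ is countable, there are only countably many finite tuples drawn from $\mathcal{B}$, and likewise countably many finite tuples of natural numbers; hence $\mathcal{D}$ is countable.

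It then remains to verify that $\mathcal{D}$ is dense in $P$. I would take an arbitrary nonempty basic open set $W=\prod_{i\in I}U_i$, where $U_i=X_i$ for all but finitely many coordinates $i_1,\dots,i_k$, and each $U_{i_j}$ is a nonempty open subset of $X_{i_j}$. Because $i_1,\dots,i_k$ are distinct points of the Hausdorff zero-dimensional space $Z$, I can choose pairwise disjoint clopen sets $B_1,\dots,B_k\in\mathcal{B}$ with $i_j\in B_j$ for each $j$. Since $D_{i_j}$ is dense in $X_{i_j}$, pick $n_j\in\omega$ with $d_{i_j}^{n_j}\in U_{i_j}$, and let $n_0$ be arbitrary. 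The corresponding point $p=p(B_1,\dots,B_k;n_0,\dots,n_k)\in\mathcal{D}$ then satisfies $p(i_j)=d_{i_j}^{n_j}\in U_{i_j}$ for each $j$, so $p\in W$. Thus $\mathcal{D}$ meets every nonempty basic open set and is therefore dense, proving that $P$ is separable.

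I expect the main obstacle to be organizational rather than deep: setting up the indexing so that the constructed family is transparently countable yet rich enough to meet every basic open set. The one genuinely delicate point is the separation step\,---\,that finitely many prescribed coordinates can always be isolated by members of the \emph{fixed} countable clopen base $\mathcal{B}$\,---\,which is precisely where the embedding of $I$ into the zero-dimensional second countable space $Z=\{0,1\}^\omega$ is used, and where the cardinality bound $|I|\le\cont$ enters.
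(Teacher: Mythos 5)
Your proof is correct: it is the classical Hewitt--Marczewski--Pondiczery argument specialized to countable density, with the index set injected into the Cantor space $\{0,1\}^\omega$ and the countable dense set consisting of ``step functions'' determined by finitely many pairwise disjoint members of its countable clopen base. The paper gives no proof of this statement\,---\,it is quoted directly from \cite[Theorem~2.3.15]{Eng89}\,---\,and your construction is essentially the one found in that reference, the only cosmetic remark being that the separation step needs just Hausdorffness plus countability of the base, so clopenness of the $B_j$ is harmless but not essential.
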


Let $X=\prod_{\alpha \in A} X_{\alpha}$ be a product space and $B$ an arbitrary 
non-empty subset of the index set $A$. Then $\pi_B: X \to X_B$ denotes the natural 
projection of $X$ onto the subproduct $X_B =\prod_{\alpha \in B} X_{\alpha}$.

We will use the following theorem about subspaces of Tychonoff products of compact 
metrizable spaces in the proof of Theorem~\ref{Th:Gr}.

\begin{theorem}\label{projections1} \cite[Theorem~2.4.15]{AT}
Suppose that $S$ is a subspace of the topological product $X=\prod_{\alpha \in A} X_{\alpha}$ 
of compact metrizable spaces such that $\pi_B(X)=X_B$ for every countable subset $ B \subset A$.
Then $X$ is the Stone-\v{C}ech compactification of $S$ and the space $S$ is pseudocompact. 
\end{theorem}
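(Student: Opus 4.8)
The plan is to prove both assertions at once by showing that \emph{every} continuous real-valued function on $S$ is bounded and extends continuously over $X$; since $X$ is compact Hausdorff and $S$ turns out to be dense in it, this simultaneously yields that $S$ is pseudocompact (by the very definition of pseudocompactness) and that $X=\beta S$. Throughout I read the hypothesis in the form $\pi_B(S)=X_B$ for every countable $B\sub A$, i.e. the projection of $S$ onto each countable subproduct is surjective. The first, routine, observation is that $S$ is dense in $X$: a basic open set of $X$ is determined by finitely many coordinates, a finite set is countable, and so the surjectivity of $\pi_F|_S$ onto the finite subproduct $X_F$ forces $S$ to meet every such set.

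The heart of the matter is a factorization statement: for each continuous $f\colon S\to\mathbb{R}$ there are a countable set $B\sub A$ and a function $g\colon X_B\to\mathbb{R}$ with $f=g\circ(\pi_B|_S)$. This is the familiar phenomenon that a continuous real-valued function on a product of separable spaces\,---\,and, more generally, on a dense subspace of such a product\,---\,depends on only countably many coordinates. To manufacture $B$ I would, for each $n$, cover $\mathbb{R}$ by countably many open intervals of length $<1/n$ and pull them back by $f$ to a countable family of cozero subsets of $S$; since each $X_\alpha$ is compact metrizable, hence second countable, $X$ is a product of separable spaces and therefore satisfies the countable chain condition (as does its dense subspace $S$), and this lets one subordinate each such cozero set to countably many basic open boxes, each depending on a finite coordinate set. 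Letting $B$ be the countable union of all the coordinate sets arising in this way over all $n$, one checks that $f$ separates no two points of $S$ sharing the same $B$-coordinate, so $f$ factors through $\pi_B|_S$. At this point the surjectivity hypothesis enters decisively: $\pi_B(S)=X_B$, and $X_B=\prod_{\alpha\in B}X_\alpha$ is a countable product of compact metrizable spaces, hence is itself compact and metrizable. Thus $g$ is defined on all of the compact metric space $X_B$, and a short argument\,---\,using that $f$ is constant on the fibres of $\pi_B|_S$ together with the continuity of $f$ to arrange that the relevant neighbourhoods are supported on coordinates inside $B$\,---\,shows that $g$ is continuous.

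Two conclusions then follow immediately. First, being continuous on the compact space $X_B$, the function $g$ is bounded, whence $f=g\circ(\pi_B|_S)$ is bounded; as $f$ was an arbitrary continuous real-valued function on $S$, the space $S$ is pseudocompact. Second, $F:=g\circ\pi_B\colon X\to\mathbb{R}$ is continuous and restricts to $f$ on $S$, so every continuous real-valued function on $S$ extends continuously over $X$; in particular $S$ is $C^*$-embedded in $X$. Since $S$ is dense in the compact Hausdorff space $X$ and $C^*$-embedded in it, the Stone--\v{C}ech universal property gives $X=\beta S$. I expect the main obstacle to be precisely the factorization step of the second paragraph: both securing the countable coordinate set $B$ (where the chain condition of the product does the work) and verifying that the induced map $g$ on $X_B$ is genuinely continuous. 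Everything else is formal once the hypothesis $\pi_B(S)=X_B$ places $g$ on a compact metrizable subproduct.
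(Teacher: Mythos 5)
Two preliminary remarks. First, the paper does not prove this statement at all: it is imported verbatim from \cite[Theorem~2.4.15]{AT}, so your argument can only be measured against the standard proof of that classical result, not against anything internal to the paper. Second, you were right to read the hypothesis as $\pi_B(S)=X_B$; as printed, $\pi_B(X)=X_B$ is vacuous, and the corrected form is what the paper actually uses when it invokes the theorem in the proof of Theorem~\ref{Th:Gr}.

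Your outline is the standard factorization proof, and it is essentially correct: $S$ is dense in $X$ because it fills finite faces; $S$ satisfies the countable chain condition as a dense subspace of the ccc space $X$; every continuous $f\colon S\to\R$ factors as $f=g\circ(\pi_B|_S)$ for some countable $B\subseteq A$ (maximal disjoint families of boxes inside the cozero sets, plus density of $S$, give the well-definedness of $g$); and then boundedness of $f$ and the $C^*$-embedding of $S$ follow because $X_B$ is compact, whence $X=\beta S$. The one step you leave genuinely open is the continuity of $g$ --- you flag it yourself as the main obstacle but only gesture at it --- and it is worth recording the clean argument, because it uses the filling hypothesis in a stronger way than your write-up suggests. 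Claim: $\pi_B|_S\colon S\to X_B$ is an \emph{open} map. Indeed, let $U$ be a nonempty box supported on a finite set $F$. If $b\in X_B$ satisfies $b(\alpha)\in U_\alpha$ for each $\alpha\in F\cap B$, extend $b$ to a point $c\in X_{B\cup F}$ by choosing $c(\alpha)\in U_\alpha$ for $\alpha\in F\setminus B$, and lift $c$ to a point $s\in S$ using $\pi_{B\cup F}(S)=X_{B\cup F}$; then $s\in U\cap S$ and $\pi_B(s)=b$. Hence $\pi_B(U\cap S)$ is exactly the open box of $X_B$ determined by the restrictions on $F\cap B$, so $\pi_B|_S$ is open, hence quotient, and $g$ is continuous because $g\circ(\pi_B|_S)=f$ is. Note that this step needs surjectivity of $\pi_{B'}|_S$ for countable sets $B'$ strictly larger than $B$; in your sketch the surjectivity hypothesis ``enters decisively'' only to place $g$ on the compact space $X_B$, which by itself would not exclude a discontinuous (even unbounded) $g$.

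Finally, a comparison with the machinery the paper does develop: there is an alternative route that avoids factorization altogether. The filling hypothesis makes $S$ $G_\delta$-dense in $X$ (every nonempty $G_\delta$-set contains a set of the form $\{x\in X:\pi_C(x)=\pi_C(x_0)\}$ with $C$ countable); the product $X$ of first countable compacta is a Moscow space, and $G_\delta$-dense subspaces of Moscow spaces are $C$-embedded, so every continuous real-valued function on $S$ extends over the compact space $X$ and is in particular bounded, giving both conclusions at once. That is precisely the mechanism the paper uses for its pseudocomplete analogue (Proposition~\ref{dense_new}, Theorem~\ref{projections2}), so it would align your proof with the rest of the paper; your factorization route is the more self-contained of the two.
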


The closure of a subset $U \subset X$ in $X$ is denoted by $cl_X {U}$.
A family $\mathcal{P}$ of open non-empty subsets of $X$ is called a \emph{$\pi$-base} 
for $X$ if every open non-empty set in $X$ contains an element of $\mathcal{P}$. 
The following notion was introduced by J.C.~Oxtoby in \cite{Ox}.

\begin{definition}\label{pseudocomplete}
A topological space $X$ is called \emph{pseudocomplete}
if there exists a sequence $\{\mathcal{P}_n: n\in\omega\}$ of $\pi$-bases
for $X$ such that for every sequence $\{U_n: n\in\omega\}$ of subsets of $X$ satisfying $U_n\in\mathcal{P}_n$ and $cl_X {U}_{n+1}\subset U_n$ for each $n\in\omega$, the 
intersection $\bigcap_{n\in\omega}U_n$ is non-empty.
\end{definition}

Every regular pseudocomplete space has the Baire property, and an arbitrary product of pseudocomplete spaces is pseudocomplete \cite{Ox}.
 The class of pseudocomplete spaces contains all pseudocompact spaces.

Several well-known problems about pseudocompleteness are still open.
For example, it is unknown whether pseudocompleteness is preserved by continuous 
open mappings or is hereditary with respect to dense $G_{\delta}$-subspaces (see 
\cite{Mill}). Let us say that a subset $Y$ of a space $X$ is \emph{$G_{\delta}$-dense} 
in $X$ if $Y$ intersects each non-empty $G_{\delta}$-set in $X$. The following fact is 
apparently new though simple. 

We recall that a space $X$ is \emph{Moscow} if the
closure of every open set in $X$ is the union of a family of $G_\delta$-sets in $X$.
All extremely disconnected spaces and all spaces of countable pseudocharacter are
evidently Moscow.

\begin{proposition}\label{dense_new}
Let $X$ be a regular pseudocomplete Moscow space and $Y$ be a $G_{\delta}$-dense 
subspace of $X$. Then $Y$ is a pseudocomplete space as well.
\end{proposition}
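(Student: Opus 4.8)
The plan is to verify Definition~\ref{pseudocomplete} for $Y$ by pulling a witnessing sequence of $\pi$-bases back from $X$ along the inclusion $Y\hookrightarrow X$, using the Moscow property together with $G_\delta$-density to transport the completeness condition. First I would record two consequences of the hypotheses. Since $X$ is regular and Moscow and $Y$ is $G_\delta$-dense, $Y$ is dense in $X$: given a nonempty open $U\subseteq X$, choose by regularity a nonempty open $V$ with $cl_X V\subseteq U$; the Moscow property writes $cl_X V$ as a union of $G_\delta$-sets, at least one of which is nonempty, and by $G_\delta$-density it meets $Y$, whence $U\cap Y\neq\emptyset$. Density gives $cl_X(O\cap Y)=cl_X O$ for every open $O$, so that $cl_Y(O\cap Y)=cl_X O\cap Y$. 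I then fix a sequence $\{\mathcal P_n\}$ of $\pi$-bases witnessing the pseudocompleteness of $X$, chosen (using complete regularity) so that every member is a cozero set and hence has a $G_\delta$ complement, and set $\mathcal Q_n=\{U\cap Y:U\in\mathcal P_n\}$. By density each $\mathcal Q_n$ is a $\pi$-base for $Y$.

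The heart of the argument is an upgrading lemma: if $U,U'$ are open in $X$, the complement $X\setminus U'$ is a $G_\delta$-set, and $cl_X U\cap Y\subseteq U'$, then in fact $cl_X U\subseteq U'$. Indeed, a point $x\in cl_X U\setminus U'$ would lie, by the Moscow property, in some $G_\delta$-set $G\subseteq cl_X U$; then $G\cap(X\setminus U')$ is a nonempty $G_\delta$-set, so by $G_\delta$-density it contains a point $y\in Y$, and this $y$ lies in $cl_X U\cap Y$ but not in $U'$, contradicting the hypothesis. Now let $\{V_n\}$ be any sequence with $V_n\in\mathcal Q_n$ and $cl_Y V_{n+1}\subseteq V_n$, and write $V_n=U_n\cap Y$ with $U_n\in\mathcal P_n$. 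Using $cl_Y V_{n+1}=cl_X U_{n+1}\cap Y$, the nesting $cl_Y V_{n+1}\subseteq V_n$ becomes $cl_X U_{n+1}\cap Y\subseteq U_n$; since $X\setminus U_n$ is a $G_\delta$-set, the lemma upgrades this to the genuine containment $cl_X U_{n+1}\subseteq U_n$.

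Thus $\{U_n\}$ is an admissible sequence for the witnessing family $\{\mathcal P_n\}$ of $X$, so $\bigcap_n U_n\neq\emptyset$ by pseudocompleteness of $X$. This intersection is a $G_\delta$-set in $X$, so $G_\delta$-density furnishes a point $y\in Y\cap\bigcap_n U_n=\bigcap_n V_n$. As $\{V_n\}$ was arbitrary, the families $\{\mathcal Q_n\}$ witness the pseudocompleteness of $Y$.

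The step I expect to be the main obstacle is the legitimacy of choosing the witnessing $\pi$-bases of $X$ to consist of cozero sets. The upgrading lemma has to be applied to the members of the very family that witnesses pseudocompleteness, so one cannot simply refine an arbitrary witnessing family to cozero sets after the fact: a closure-nested chain of cozero sets inscribed inside a given chain of $\pi$-base elements may drift, and one cannot in general reinscribe the witnessing $\pi$-base along it. It is exactly here that complete regularity (to secure $G_\delta$ complements) and the Moscow property (to convert the weaker closure condition in $Y$ into the genuine $X$-closure condition demanded by Definition~\ref{pseudocomplete}) are indispensable; establishing that a cozero witnessing family is available, or otherwise bridging the mismatch between closures in $Y$ and in $X$, is the crux of the proof.
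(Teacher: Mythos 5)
Your proposal follows the same skeleton as the paper's proof: trace the witnessing $\pi$-bases onto $Y$, lift a nested chain $\{V_n\}$ of traces back to sets $U_n\in\mathcal{P}_n$, upgrade the containment $cl_X U_{n+1}\cap Y\subseteq U_n$ to the genuine containment $cl_X U_{n+1}\subseteq U_n$ via the Moscow property and $G_\delta$-density, and finish by applying the pseudocompleteness of $X$ and then $G_\delta$-density once more to land a point of the intersection inside $Y$. Your upgrading lemma and the concluding step are correct as stated. (A minor remark: the density of $Y$ in $X$ needs no appeal to regularity or the Moscow property; every nonempty open set is trivially a $G_\delta$-set, so $G_\delta$-density already implies density.)

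The genuine gap is the one you flag yourself: your argument requires a witnessing sequence of $\pi$-bases consisting of \emph{cozero} sets, and you give no proof that such a sequence exists. Your diagnosis of why this is problematic is accurate\,---\,pseudocompleteness quantifies over chains from the specific witnessing families, so one cannot inscribe cozero sets into the members of a given witnessing family (chains of the inscribed sets do not yield chains of the original ones), nor need an arbitrary witnessing family contain any cozero members. In addition, cozero sets call on complete regularity, whereas the proposition assumes only that $X$ is regular. The idea you are missing is that the Moscow property should be applied on the \emph{complement} side as well, which lets one get by with a weaker normalization: the paper takes each $\mathcal{P}_n$ to consist of \emph{regularly open} sets (asserting this is possible since $X$ is regular). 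For a regularly open $U_n$ one has $X\setminus U_n=cl_X(X\setminus cl_X U_n)$, the closure of an open set, hence a union of $G_\delta$-sets by the Moscow property; so if $cl_X U_{n+1}\not\subseteq U_n$, then $cl_X U_{n+1}$ and $cl_X(X\setminus cl_X U_n)$ meet, each is a union of $G_\delta$-sets, their intersection therefore contains a nonempty $G_\delta$-set and must meet $Y$, contradicting $cl_Y V_{n+1}\subseteq V_n\subseteq U_n$. This removes both the need for complete regularity and the need for $G_\delta$ complements, leaving only the regular-open normalization. Your instinct that the normalization of the witnessing families is the delicate point of the whole proof is sound\,---\,the paper, too, asserts rather than proves its (weaker) normalization\,---\,but as submitted your argument rests on a stronger normalization claim that is left entirely unestablished, so the proof is incomplete at exactly the step on which everything else depends.
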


\begin{proof} 
Fix a sequence $\{\mathcal{P}_n: n\in\omega\}$ of $\pi$-bases for $X$ witnessing the
pseudocompleteness of $X$. Since $X$ is regular, we may assume that each $\mathcal{P}_n$ 
consists of regularly open sets. For every $n\in\omega$, we put $\mathcal{Q}_n = 
\{U \cap Y: U \in \mathcal{P}_n\}$. We claim that the sequence $\{\mathcal{Q}_n: n\in\omega\}$ 
satisfies the requirements of Definition~\ref{pseudocomplete}.

Indeed, let $\{W_n: n\in\omega\}$ be a sequence such that $W_n\in \mathcal{Q}_n$ and
$cl_Y{W_{n+1}}\subset W_n$ for each $n\in\omega$. For every $n\in\omega$, take an open
set $U_n\in\mathcal{P}_n$ with $U_n\cap Y=W_n$. It is easy to see that $cl_X{U_{n+1}}\subset
U_n$ for all $n\in\omega$. If not, then $cl_X{U_{n+1}}\not\subset U_n$ for some $n\in\omega$.
Since the set $U_n$ is regular open in $X$, the latter means that 
$$
cl_{X}{U_{n+1}}\cap cl_X(X\setminus cl_X{U_n})\neq\emptyset.
$$
As $X$ is a Moscow space, each of the sets $cl_{X}{U_{n+1}}$ and $cl_X(X\setminus cl_X{U_n})$
is the union of $G_\delta$-sets. By the $G_\delta$-density of $Y$ in $X$, we conclude that
\begin{equation}\label{eq:1}
 Y\cap \big(cl_{X}{U_{n+1}}\cap cl_X(X\setminus cl_X{U_n})\big)\neq\emptyset
\end{equation}
or, equivalently, $cl_Y{W_{n+1}}\cap cl_X(X\setminus cl_X{U_n})\neq\emptyset$.
However, $cl_Y{W_{n+1}}\subset W_n\subset U_n$ and $U_n\cap cl_X(X\setminus cl_X{U_n})=\emptyset$, which contradicts (\ref{eq:1}). This proves that $cl_X{U_{n+1}}\subset
U_n$ for each $n\in\omega$.

Since $U_n\in\mathcal{P}_n$ for each $n\in\omega$ and  the space $X$ is pseudocomplete,
it follows that $\bigcap_{n\in\omega} U_n\neq\emptyset$. Making use of the 
$G_\delta$-denseness of $Y$ in $X$, we see that 
$$
\emptyset\neq Y\cap\bigcap_{n\in\omega} U_n=\bigcap_{n\in\omega} (U_n\cap Y)=
\bigcap_{n\in\omega} W_n.
$$
This implies the pseudocompleteness of $Y$.
\end{proof}

Let us recall that the \emph{$o$-tightness} of a space $X$, denoted by $ot(X)$, is the 
minimum cardinal $\kappa\geq\omega$ such that for every family $\gamma$ of open 
sets in $X$ and every point $x\in \overline{\bigcup\gamma}$, one can find a subfamily 
$\lambda$ of $\gamma$ with $|\lambda|\leq\kappa$ such that $x\in \overline{\bigcup\lambda}$. 
It is clear that every space $X$ satisfies $ot(X)\leq c(X)$ and $ot(X)\leq t(X)$, where $c(X)$
and $t(X)$ are the cellularity and tightness of $X$, respectively (see \cite{Tk83}). 

In the presence of an additional algebraic structure on a given space $X$, mild
topological restrictions on $X$, like having countable $o$-tightness, imply that
$X$ is a Moscow space (see \cite[Section~6.4]{AT}). We apply this fact in the
following corollary.

\begin{corollary}\label{Cor:Parato}
Let $G$ be a regular paratopological group of countable $o$-tightness. If $G$ is
pseudocomplete, then so is every $G_\delta$-dense subspace of $G$.
\end{corollary}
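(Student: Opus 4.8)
The plan is to derive this statement directly from Proposition~\ref{dense_new}, which asserts that every $G_\delta$-dense subspace of a regular pseudocomplete Moscow space is itself pseudocomplete. Since $G$ is assumed to be regular and pseudocomplete, the only hypothesis of Proposition~\ref{dense_new} that is not already at hand is that $G$ be a Moscow space. Thus the entire task reduces to verifying that a regular paratopological group of countable $o$-tightness is Moscow.

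To establish the Moscow property, I would invoke the fact recorded just before the corollary: on a space carrying a compatible algebraic structure, countable $o$-tightness forces the space to be Moscow (see \cite[Section~6.4]{AT}). Since $G$ is a paratopological group and $ot(G)\leq\omega$ by hypothesis, this fact yields that the closure of every open subset of $G$ is a union of $G_\delta$-sets; that is, $G$ is Moscow. I would spell out, if necessary, that the relevant implication applies to paratopological (and not merely topological) groups, since this is the single place where the algebraic structure of $G$ enters the argument.

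With the Moscow property secured, all three hypotheses of Proposition~\ref{dense_new} hold for $X=G$: the space $G$ is regular, pseudocomplete, and Moscow. Consequently, for any $G_\delta$-dense subspace $Y$ of $G$, Proposition~\ref{dense_new} immediately gives that $Y$ is pseudocomplete, which is precisely the assertion of the corollary.

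The main obstacle is therefore not in the behaviour of $G_\delta$-dense subspaces\,---\,that is handled wholesale by Proposition~\ref{dense_new}\,---\,but in confirming that the passage from countable $o$-tightness to the Moscow property remains valid for paratopological groups. One must check that the argument in \cite[Section~6.4]{AT}, often stated for topological groups, does not covertly use continuity of inversion, or else supply an alternative justification relying only on the continuity of multiplication.
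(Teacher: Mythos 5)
Your proposal is correct and follows exactly the paper's own argument: the paper likewise reduces the corollary to Proposition~\ref{dense_new} by citing \cite[Corollary~6.4.11,\,5)]{AT}, which states the implication from countable $o$-tightness to the Moscow property precisely for paratopological groups, so the caveat you raise about continuity of inversion is already settled by that reference.
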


\begin{proof}
Since $G$ has countable $o$-tightness, \cite[Corollary~6.4.11,\,5)]{AT} implies that $G$ 
is a Moscow space. Hence the required conclusion follows from Proposition~\ref{dense_new}.
\end{proof}

The next result will be used in the proof of Theorem~\ref{Th:LCS}. 

\begin{theorem}\label{projections2}
Let $Y$ be a subspace of the topological product $X=\prod_{\alpha \in A} X_{\alpha}$ of 
regular pseudocomplete first countable spaces such that $\pi_B(Y)=\prod_{\alpha\in B} 
X_\alpha$ for every countable subset $B$ of $A$. Then the space $Y$ is pseudocomplete.
\end{theorem}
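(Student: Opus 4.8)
The plan is to exhibit $Y$ as a $G_\delta$-dense subspace of the ambient product $X=\prod_{\alpha\in A}X_\alpha$ and then to apply Proposition~\ref{dense_new}. That proposition requires $X$ to be regular, pseudocomplete and Moscow, and $Y$ to be $G_\delta$-dense in $X$; so the work splits into verifying these four facts.

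Two of them are essentially free. Regularity is productive, so $X$ is regular; and since each factor $X_\alpha$ is pseudocomplete and an arbitrary product of pseudocomplete spaces is pseudocomplete by Oxtoby \cite{Ox}, the product $X$ is pseudocomplete. For the $G_\delta$-density of $Y$, I would use the standard fact that in any product every nonempty $G_\delta$ set $H=\bigcap_{n}O_n$ contains a cylindrical $G_\delta$ set $\pi_B^{-1}(V_B)$, where $B\subset A$ is countable and $V_B$ is a nonempty $G_\delta$ in $X_B$: pick $p\in H$, choose for each $n$ a basic box $N_n\ni p$ with $N_n\subset O_n$ of finite support $F_n$, and set $B=\bigcup_n F_n$. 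Since by hypothesis $\pi_B(Y)=X_B\supset V_B$, some $y\in Y$ satisfies $\pi_B(y)\in V_B$, whence $y\in\pi_B^{-1}(V_B)\subset H$; thus $Y$ meets every nonempty $G_\delta$ set of $X$.

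The heart of the matter, and the step I expect to be the main obstacle, is to prove that the product $X$ of first countable spaces is a Moscow space. Because each $X_B$ with $B$ countable is again first countable, each singleton $\{\pi_B(x)\}$ is a $G_\delta$ in $X_B$, so every fiber $\pi_B^{-1}(\pi_B(x))$ is a $G_\delta$ subset of $X$. It therefore suffices to prove the following fiber statement: for every open $U\subset X$ and every $x\in cl_X U$ there is a countable $B\subset A$ with $\pi_B^{-1}(\pi_B(x))\subset cl_X U$; the fiber is then the $G_\delta$ set witnessing the Moscow property at $x$. I would establish this by a closing-off construction. Fixing a countable neighbourhood base at each coordinate $x_\alpha$, I build an increasing chain of countable sets $B_0\subset B_1\subset\cdots$ so that $B_{n+1}$ absorbs the finite supports of witnesses chosen as follows: for each finite $F\subset B_n$ and each basic neighbourhood $O$ of $\pi_F(x)$, since $x\in cl_X U$ there is a point $u\in U$ and a basic box $M\subset U$ around $u$ with $\pi_F(u)\in O$; I put the support of $M$ into $B_{n+1}$. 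Finally set $B=\bigcup_n B_n$.

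It remains to check that this $B$ works, and the point is pure bookkeeping. Let $z$ agree with $x$ on $B$ and let $N$ be a basic neighbourhood of $z$; split the support of $N$ into its trace $F$ on $B$ and the complementary part off $B$. On $F$ one has $z=x$, so a witness $u\in U$ (with box $M\subset U$) chosen for the corresponding neighbourhood of $\pi_F(x)$ already satisfies the constraints of $N$ on those coordinates; the support of $M$ lies inside $B$, hence is disjoint from the off-$B$ coordinates of $N$, so I may freely reset $u$ on those coordinates to the values of $z$ without leaving $M\subset U$. The resulting point lies in $N\cap U$, so $N\cap U\neq\emptyset$ and $z\in cl_X U$, proving the fiber statement and hence that $X$ is Moscow. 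With $X$ regular, pseudocomplete and Moscow and $Y$ a $G_\delta$-dense subspace, Proposition~\ref{dense_new} then yields that $Y$ is pseudocomplete, completing the argument.
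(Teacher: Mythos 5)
Your proof is correct and follows essentially the same route as the paper: exhibit $Y$ as a $G_\delta$-dense subspace of $X$, note that $X$ is regular and pseudocomplete (by Oxtoby's product theorem), verify that $X$ is a Moscow space, and conclude with Proposition~\ref{dense_new}. The only divergence is that where the paper simply cites \cite[Corollary~6.3.15]{AT} for the fact that a product of regular first countable spaces is Moscow, you prove this from scratch, and your closing-off argument is a correct rendering of the standard proof; the one point worth making explicit is that the fiber $\pi_B^{-1}(\pi_B(x))$ is a $G_\delta$-set because the factors are $T_1$ (so that singletons in the first countable space $X_B$ are $G_\delta$), which holds here under the usual convention that regular spaces are $T_1$.
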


\begin{proof} 
It is clear that $Y$ is a $G_{\delta}$-dense subspace of $X$ because $Y$ fills all countable 
faces of the product space $X$. Also, the space $X$ is pseudocomplete as a product of  pseudocomplete spaces \cite{Ox}. Since each factor $X_\alpha$ is regular and first countable, 
it follows from \cite[Corollary~6.3.15]{AT} that $X$ is a regular Moscow space. Finally, $Y$ is pseudocomplete in view of Proposition~\ref{dense_new}.
\end{proof}

If the factors $X_\alpha$ are paratopological groups, we can complement
Theorem~\ref{projections2} as follows.

\begin{corollary}\label{Cor:SepPG}
Let $Y$ be a subspace of the topological product $H=\prod_{\alpha \in A} H_{\alpha}$ of 
regular, pseudocomplete, separable paratopological groups. If $\pi_B(Y)=\prod_{\alpha\in B} 
H_\alpha$ for every countable subset $B$ of $A$, then the space $Y$ is pseudocomplete.
\end{corollary}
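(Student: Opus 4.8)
The plan is to deduce the result directly from Corollary~\ref{Cor:Parato}, by verifying its hypotheses for the ambient product $H$ and checking that $Y$ is a $G_\delta$-dense subspace of $H$. First I would observe that $H=\prod_{\alpha\in A}H_\alpha$, being a Tychonoff product of regular paratopological groups, is itself a regular paratopological group under the coordinatewise operations; moreover, as a product of pseudocomplete spaces it is pseudocomplete by Oxtoby's theorem \cite{Ox}. Next, exactly as in the proof of Theorem~\ref{projections2}, the condition $\pi_B(Y)=\prod_{\alpha\in B}H_\alpha$ for every countable $B\sub A$ guarantees that $Y$ fills every countable face of $H$. Since every non-empty $G_\delta$-set in a product contains a non-empty canonical $G_\delta$-set depending on only countably many coordinates, and $Y$ surjects onto every such countable subproduct, it follows that $Y$ meets every non-empty $G_\delta$-set; that is, $Y$ is $G_\delta$-dense in $H$.

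The one point at which the hypotheses genuinely differ from those of Theorem~\ref{projections2} is that the factors $H_\alpha$ are no longer assumed to be first countable, so the route to the Moscow property through \cite[Corollary~6.3.15]{AT} is unavailable. Instead I would exploit separability. Every finite (indeed every countable) subproduct of the $H_\alpha$ is separable by the Hewitt--Marczewski--Pondiczery theorem (Theorem~\ref{HMP}), hence satisfies the countable chain condition; a standard $\Delta$-system argument then shows that the whole product $H$ is ccc, so that $c(H)=\omega$. Combined with the inequality $ot(H)\leq c(H)$ recalled above, this yields $ot(H)=\omega$, i.e.\ $H$ has countable $o$-tightness. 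This is the step I expect to be the main obstacle, in the sense that it is the only genuinely non-routine ingredient; the remaining facts are just the productivity of regularity, of the paratopological-group structure, and of pseudocompleteness, together with the $G_\delta$-density observation already implicit in the proof of Theorem~\ref{projections2}.

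Finally, $H$ is a regular, pseudocomplete paratopological group of countable $o$-tightness and $Y$ is $G_\delta$-dense in $H$, so Corollary~\ref{Cor:Parato} applies directly and gives that $Y$ is pseudocomplete, as required.
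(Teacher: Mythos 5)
Your proof is correct, and it reaches the conclusion by a slightly different route than the paper. The paper's own proof is essentially two lines: it quotes \cite[Theorem~6.4.19]{AT} (the product of separable paratopological groups is a Moscow space) to get that $H$ is Moscow, notes that $Y$ is $G_\delta$-dense in the regular pseudocomplete space $H$, and applies Proposition~\ref{dense_new} directly\,---\,bypassing Corollary~\ref{Cor:Parato} altogether. You instead manufacture the Moscow property yourself: you show $c(H)=\omega$ (finite subproducts are separable, hence ccc, and the $\Delta$-system lemma lifts ccc to the full product\,---\,this is the classical fact that arbitrary products of separable spaces are ccc), deduce $ot(H)\leq c(H)=\omega$, and then feed the regular pseudocomplete paratopological group $H$ of countable $o$-tightness into Corollary~\ref{Cor:Parato}, whose proof is where \cite[Corollary~6.4.11,\,5)]{AT} and Proposition~\ref{dense_new} get invoked. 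So the two arguments funnel through the same final mechanism (Moscow $+$ $G_\delta$-density $+$ Proposition~\ref{dense_new}); the difference is that the paper cites a ready-made product theorem for the Moscow property, while you re-derive its content from cellularity considerations. What your version buys is self-containedness\,---\,it uses only facts already recorded in the paper (Theorem~\ref{HMP}, the inequality $ot\leq c$, Oxtoby's product theorem, Corollary~\ref{Cor:Parato}) plus the standard $\Delta$-system argument\,---\,at the cost of being longer; the paper's citation is shorter but leans on an external result whose proof is essentially the chain you wrote out. All the individual steps you give (productivity of regularity, of the paratopological group structure, and of pseudocompleteness; the $G_\delta$-density of a subspace filling all countable faces; the ccc argument) are sound.
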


\begin{proof}
By \cite[Theorem~6.4.19]{AT}, $H$ is a Moscow space. Since $Y$ is $G_\delta$-dense
in $H$ and $H$ is regular, it remains to apply Proposition~\ref{dense_new}.
\end{proof}

Every maximal linearly independent subset $B$ of a vector space $E$ is called a 
\emph{Hamel basis} for $E$. The cardinality of $B$ is an algebraic dimension of $E$ 
which will be denoted by $\ldim (E)$. It is known that $\ldim (E)= \cont$ for any separable 
infinite-dimensional Banach space $E$ (see \cite{Lacey}).

\section{Products with a compact or countable factor}\label{Sec:C-compact_Groups}

Let us say that a topological group $G$ is \emph{strongly separable} (briefly, 
\emph{$S$-separable}) if for any topological group $H$ such that every closed 
subgroup of $H$ is separable, the product $G\times H$ has the same property. 

One of our main observations is the following result which can be reformulated by saying
that every separable compact group is $S$-separable. 

\begin{theorem}\label{motivation}
Let $G$ be a separable compact group and $H$ be a topological group in which all 
closed subgroups are separable. Then all closed subgroups of the product $G\times H$ 
are separable as well.
\end{theorem}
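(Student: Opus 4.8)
The plan is to study an arbitrary closed subgroup $L$ of $G\times H$ by projecting onto $H$, using the compactness of $G$ at two decisive points. First I would invoke the standard fact that, since $G$ is compact, the projection $\pi_H\colon G\times H\to H$ is a closed map. Consequently $\pi_H(L)$ is a closed subgroup of $H$, and so $\pi_H(L)$ is separable by the hypothesis on $H$. Next I would set $N=L\cap(G\times\{e_H\})$, where $e_H$ is the neutral element of $H$. As $G\times\{e_H\}$ is a closed normal subgroup of $G\times H$ topologically isomorphic to $G$, and $L$ is closed, $N$ is (a copy of) a closed subgroup of the separable compact group $G$; hence $N$ is separable by Theorem~\ref{compact_group}. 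Being the intersection of a normal subgroup with $L$, the subgroup $N$ is also normal in $L$.

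The core of the proof is to show that the restriction $\phi=\pi_H|_L\colon L\to\pi_H(L)$ induces a \emph{topological} isomorphism $L/N\cong\pi_H(L)$, which will give that $L/N$ is separable. Algebraically $\ker\phi=N$, so the induced map $\bar\phi\colon L/N\to\pi_H(L)$ is a continuous bijection; the key is that it is closed. Indeed, for the quotient map $q\colon L\to L/N$ and any closed $C\subseteq L/N$, the preimage $q^{-1}(C)$ is closed in $L$ and therefore in $G\times H$, so $\bar\phi(C)=\pi_H(q^{-1}(C))$ is closed in $H$ by the closedness of $\pi_H$. A continuous closed bijection is a homeomorphism, so $L/N$ is separable.

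It then remains to assemble the separability of $N$ and of $L/N$ into that of $L$ by a three-space argument. Choosing countable dense sets $D\subseteq N$ and $\{y_n:n\in\omega\}\subseteq L/N$ together with points $x_n\in q^{-1}(y_n)$, I would verify that $A=\{x_nd:n\in\omega,\ d\in D\}$ is dense in $L$: given a nonempty open $V\subseteq L$, the openness of $q$ yields some $y_n\in q(V)$, so $V$ meets the coset $x_nN$, whence $x_n^{-1}V\cap N$ is a nonempty open subset of $N$ and contains some $d\in D$, giving $x_nd\in V\cap A$.

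The step I expect to demand the most care is the identification $L/N\cong\pi_H(L)$. Separability does not transfer backward along an arbitrary continuous bijection, so one genuinely needs $\bar\phi$ to be open (equivalently here, closed); this is precisely where the compactness of $G$ is indispensable, as it is what makes $\pi_H$ closed and simultaneously forces $\pi_H(L)$ to be a closed, hence separable, subgroup of $H$.
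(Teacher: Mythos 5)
Your proposal is correct and follows essentially the same route as the paper: project onto $H$, use Kuratowski's theorem to see that the projection is closed (so its image is a closed, hence separable, subgroup of $H$), identify the kernel with a closed subgroup of the separable compact group $G$, and combine via the three-space property for separability. The only difference is presentational: the paper cites the openness of the closed quotient homomorphism \cite[Proposition~1.5.14]{AT} and the three-space property \cite[Theorem~1.5.23]{AT} as black boxes, whereas you unpack both, proving that the induced bijection $L/N\to\pi_H(L)$ is a homeomorphism and constructing the countable dense set in $L$ explicitly.
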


\begin{proof}
Take a closed subgroup $C$ of $G\times H$ and denote by $p_H$ the projection 
of $G\times H$ onto the second factor. According to Kuratowski's theorem (see 
\cite[Theorem~3.1.16]{Eng89}) $p_H$ is a closed mapping. Therefore  the image 
$D=p_H(C)$ is a closed subgroup of $H$. It follows from our assumptions about 
$H$ that the group $D$ is separable. Let $\pi_H$ be the restriction of $p_H$ to 
$C$ and $K$ be the kernel of $\pi_H$. Clearly the homomorphism $\pi_H\colon C\to D$ 
is a continuous closed mapping. Hence the homomorphism $\pi_H$ of $C$ onto 
$D$ is a quotient mapping and therefore $\pi_H$ is open \cite[Proposition~1.5.14]{AT}. 
The group $K$ is topologically isomorphic to a closed subgroup of $G$, so $K$ is 
separable according to Theorem~\ref{compact_group}. Finally, $C$ is separable 
because separability is a three-space property in topological 
groups \cite[Theorem~1.5.23]{AT}.
\end{proof}

It is not clear to which extent one can generalize Theorem~\ref{motivation} by weakening 
the compactness assumption on $G$. However, some additional conditions on the groups 
$G$ and/or $H$ have to be imposed as it follows from Theorem~\ref{Th:Gr} in 
Section~\ref{Sec:Groups}. 

In the next proposition we present another situation when the projection 
$G\times H\to H$ turns out to be a closed mapping.

\begin{proposition}\label{Pro:CC}
Let $G$ be a countably compact topological group and $H$ a separable
metrizable topological group. If all closed subgroups of $G$ are separable,
then the product group $G\times H$ has the same property.
\end{proposition}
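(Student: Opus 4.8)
The plan is to mimic the structure of the proof of Theorem~\ref{motivation}, replacing the appeal to Kuratowski's theorem (which requires compactness of the projected-out factor) by an argument that the projection $p_H\colon G\times H\to H$ is still a closed mapping when $G$ is merely countably compact and $H$ is a nice space. First I would let $C$ be an arbitrary closed subgroup of $G\times H$ and set $D=p_H(C)$. The key point is that the projection $p_H$ is closed: this is where countable compactness of $G$ enters. I expect to invoke the classical fact that if $G$ is countably compact and $H$ is a sequential (in particular, metrizable) space, then the projection $X\times H\to H$ is closed for $X=G$; equivalently, countably compact spaces satisfy the analogue of Kuratowski's criterion against metrizable (or sequential) second factors. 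Granting this, $D=p_H(C)$ is a closed subgroup of the separable metrizable group $H$, hence itself separable metrizable.

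Once $D$ is known to be closed and separable, the remainder runs exactly parallel to Theorem~\ref{motivation}. Let $\pi_H=p_H\restriction C\colon C\to D$ and let $K=\ker\pi_H$. Since $p_H$ is closed, so is its restriction $\pi_H$ to the closed subgroup $C$, and therefore $\pi_H$ is a continuous closed surjective homomorphism onto $D$; by \cite[Proposition~1.5.14]{AT} it is a quotient map and hence open. The kernel $K$ is (topologically isomorphic to) a closed subgroup of $G$, so by the hypothesis that all closed subgroups of $G$ are separable, $K$ is separable. With $K$ separable and $D$ separable, the three-space property of separability for topological groups \cite[Theorem~1.5.23]{AT} yields that $C$ is separable.

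The main obstacle is establishing closedness of the projection $p_H\colon G\times H\to H$. The clean way to handle this is the Kuratowski-type theorem: a space $X$ is countably compact if and only if the projection $X\times Y\to Y$ is closed for every sequential space $Y$, and in particular for every metrizable $Y$. If a direct citation is unavailable, I would prove closedness by hand as follows: take a closed set $C\subseteq G\times H$ and a point $h\in\ovl{p_H(C)}\sm p_H(C)$; since $H$ is first countable, choose a sequence $h_n\to h$ with $h_n\in p_H(C)$ and pick $g_n\in G$ with $(g_n,h_n)\in C$; by countable compactness the sequence $(g_n)$ has a cluster point $g\in G$, and a short diagonal argument combining $g_n\to g$ (along a suitable subnet) with $h_n\to h$ places $(g,h)$ in the closed set $C$, forcing $h\in p_H(C)$, a contradiction. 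I would verify that first countability of $H$ is exactly what makes the sequential selection and clustering argument go through, so the metrizability hypothesis on $H$ is used precisely here.
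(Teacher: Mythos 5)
Your proposal is correct and follows essentially the same route as the paper: the paper likewise reduces everything to the fact that the projection $G\times H\to H$ is closed when $G$ is countably compact and $H$ is metrizable (citing exactly the Kuratowski-type result, \cite[Theorem~3.10.7]{Eng89}, that you invoke), and then repeats the argument of Theorem~\ref{motivation} — closed restriction to $C$, hence open quotient homomorphism onto a closed (hence separable) subgroup of $H$, separable kernel as a closed subgroup of $G$, and the three-space property. Your fallback hand proof of the closedness of the projection is a standard and correct cluster-point argument, but it is not needed given the citation.
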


\begin{proof}
It is known (see \cite[Theorem~3.10.7]{Eng89}) that the projection $p\colon G\times H\to H$
is a closed mapping. Let $C$ be a closed subgroup of $G\times H$ and $\pi$ be 
the restriction to $C$ of the projection $p$. Since $C$ is closed in $G\times H$,
$\pi$ is also a closed mapping. The mapping $\pi$ being a continuous homomorphism, 
we see that $\pi\colon C\to \pi(C)$ is open. Now we finish the proof by the same argument
as in Theorem~\ref{motivation}.
\end{proof}

The following problem arises in an attempt to generalize Proposition~\ref{Pro:CC}:

\begin{problem}\label{Prob:CN}
Let $G$ be a countably compact topological group such that all closed subgroups 
of $G$ are separable, and $H$ a topological group with a countable network. 
Are the closed subgroups of $G\times H$ separable?
\end{problem}

Next we show that every countable topological group is $S$-separable. A more
general result will be presented in Theorem~\ref{class_S}.
 
\begin{proposition}\label{countable}
Let $G$ be a countable topological group and $H$ be a topological group in which 
all closed subgroups are separable. Then all closed subgroups of the product 
$G\times H$ are separable as well.
\end{proposition}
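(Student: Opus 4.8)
The plan is to mimic the structure of the proof of Theorem~\ref{motivation}, reducing the problem to the three-space property of separability in topological groups, but the key difficulty is that the projection $p_H\colon G\times H\to H$ need not be closed when $G$ is merely countable rather than compact. So I would look for a different way to control a closed subgroup $C$ of $G\times H$.

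First I would take an arbitrary closed subgroup $C$ of $G\times H$ and consider the projection $p_G\colon G\times H\to G$ onto the \emph{first} (countable) factor. Since $G$ is countable, the image $p_G(C)$ is a countable subset of $G$, and for each $g\in p_G(C)$ the fiber $C\cap(\{g\}\times H)$ is a closed subset of $\{g\}\times H$. The natural object to examine is the kernel $N=C\cap(\{e_G\}\times H)$, which is topologically isomorphic to a closed subgroup of $H$ and hence separable by the hypothesis on $H$. The group $C$ is then built from $N$ by taking countably many translates, one for each element of the countable set $p_G(C)$: indeed $C=\bigcup_{g\in p_G(C)}(C\cap(\{g\}\times H))$, and each nonempty fiber is a coset of $N$ inside $\{g\}\times H$.

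Next I would assemble separability of $C$ from the separability of $N$ together with the countability of the index set. Concretely, $N$ is separable, so fix a countable dense subset $S$ of $N$; for each $g\in p_G(C)$ choose one point $c_g\in C$ with $p_G(c_g)=g$; then the countable set $\bigcup_{g\in p_G(C)} c_g\cdot S$ should be dense in $C$, since multiplication by the fixed element $c_g$ is a homeomorphism carrying the dense subset $S$ of the fiber through $e_G$ onto a dense subset of the fiber through $g$. This gives a countable dense subset of $C$ directly, without invoking the three-space property at all. Alternatively, and perhaps more cleanly, one observes that $p_G(C)$ is a countable (hence separable) quotient group $C/N$ and applies the three-space property of separability \cite[Theorem~1.5.23]{AT}, exactly as in Theorem~\ref{motivation}, once one knows $N$ is separable and $C/N$ is countable.

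The main obstacle I anticipate is justifying that the quotient topology on $C/N$ really is countable (equivalently, that the fibered decomposition behaves well topologically), since the closed-map argument used for compact $G$ is unavailable here. I would handle this by noting that $C/N$ is algebraically isomorphic to the countable group $p_G(C)\subseteq G$ and that any topological group on a countable underlying set is trivially separable, so the quotient is separable regardless of the fine structure of its topology. With $N$ separable (a closed subgroup of $H$) and $C/N$ separable (countable), the three-space property yields the separability of $C$, completing the argument.
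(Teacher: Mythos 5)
Your main argument---projecting onto the countable factor, observing that the kernel $N$ is topologically isomorphic to a closed subgroup of $H$ and hence separable, and covering $C$ by countably many fibers, each a translate of $N$ carrying a countable dense subset---is exactly the paper's own proof of Proposition~\ref{countable}. Your alternative route via the three-space property is also sound, precisely for the reason you give (the quotient $C/N$ is countable as a set, hence separable in whatever topology it carries), so either version completes the argument.
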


\begin{proof}
We modify slightly the idea presented in the proof of Theorem~\ref{motivation}.
Take a closed subgroup $C$ of $G\times H$ and let $\pi$ be the restriction 
to $C$ of the projection $G\times H\to G$. Then the image $D=\pi(C)$ is a 
countable subgroup of $G$. The kernel of $\pi$ is topologically isomorphic 
to a closed subgroup of $H$ and, hence, is separable. Therefore all fibers of 
$\pi$ are separable. For every $y\in D$, let $S_y$ be a countable dense 
subset of $\pi^{-1}(y)$. Then $S=\bigcup_{y\in D} S_y$ is a countable dense
subset of $C$. Indeed, let $U$ be an arbitrary non-empty open set in $C$. Take
an element $x\in U$ and put $y=\pi(x)$. Then $x\in U\cap \pi^{-1}(y)\neq\emptyset$,
so the density of $S_y$ in $\pi^{-1}(y)$ implies that $U\cap S_y\neq\emptyset$.
Since $S_y\subseteq S$, we conclude that $U\cap S\neq\emptyset$, which
shows that $S$ is dense in $C$. Hence $C$ is separable.
\end{proof}

\begin{proposition}\label{operations}
The class of $S$-separable groups is closed under the operations:
\begin{enumerate}
\item[{\rm (1)}] finite products; 
\item[{\rm (2)}] taking closed subgroups; 
\item[{\rm (3)}] taking continuous homomorphic images.
\end{enumerate}
\end{proposition}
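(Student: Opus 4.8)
The plan is to handle the three closure operations separately, in each case reducing the assertion about a new group to the already-assumed $S$-separability of a group in hand. Throughout I fix an arbitrary topological group $H$ all of whose closed subgroups are separable and call such an $H$ \emph{admissible}; proving that a group $G'$ is $S$-separable then amounts to showing that every closed subgroup of $G'\times H$ is separable, for every admissible $H$.

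For (1), an obvious induction reduces matters to a product $G_1\times G_2$ of two $S$-separable groups. The key observation is the associativity isomorphism $(G_1\times G_2)\times H\cong G_1\times(G_2\times H)$. First I would invoke the $S$-separability of $G_2$ to conclude that all closed subgroups of $G_2\times H$ are separable, so that $G_2\times H$ is itself an admissible group. Applying the $S$-separability of $G_1$ to this admissible factor shows that all closed subgroups of $G_1\times(G_2\times H)$, hence of $(G_1\times G_2)\times H$, are separable. As $H$ was arbitrary, $G_1\times G_2$ is $S$-separable.

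For (2), let $G_0$ be a closed subgroup of an $S$-separable group $G$. Then $G_0\times H$ is a closed subgroup of $G\times H$, so any closed subgroup $C$ of $G_0\times H$ is also closed in $G\times H$. Since $G$ is $S$-separable, $C$ is separable, and the conclusion follows immediately.

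The one step requiring an actual idea is (3). Let $\varphi\colon G\to G'$ be a continuous surjective homomorphism with $G$ $S$-separable, and set $\psi=\varphi\times\mathrm{id}_H\colon G\times H\to G'\times H$, which is again a continuous surjective homomorphism. Given a closed subgroup $C'$ of $G'\times H$, I would form its preimage $C=\psi^{-1}(C')$, which is a closed subgroup of $G\times H$ and hence separable by the $S$-separability of $G$. The point is that surjectivity of $\psi$ yields $\psi(C)=\psi(\psi^{-1}(C'))=C'$, so $C'$ is the continuous image of the separable group $C$ and is therefore separable. I expect this preimage argument\,---\,pulling a closed subgroup of $G'\times H$ back to $G\times H$ and pushing its countable dense set forward along $\psi$\,---\,to be the crux of the proof; the supporting verifications (that $\psi$ is a continuous surjective homomorphism and that preimages of closed subgroups are closed subgroups) are entirely routine.
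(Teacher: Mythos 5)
Your proof is correct and takes essentially the same route as the paper: the paper dismisses (1) and (2) as evident (your associativity and closed-subgroup arguments are exactly the routine verifications intended) and proves (3) by precisely your argument, pulling a closed subgroup of $G'\times H$ back along $\varphi\times\mathrm{id}_H$ to a separable closed subgroup and pushing its dense countable set forward.
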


\begin{proof}
Items (1) and (2) are evident, so we verify only (3). Let $\phi\colon F\to G$ be a continuous
onto homomorphism of topological groups, where the group $F$ is $S$-separable. Also,
let $H$ be a topological group such that all closed subgroups of $H$ are separable. 
Denote by $i_H$ the identity mapping of $H$ onto itself. Then $g= \phi \times i_H$
is a continuous homomorphism of $F\times H$ onto $G\times H$. If $D$ is a closed 
subgroup of  $G\times H$, then $C=g^{-1}(D)$ is a separable closed subgroup of 
$F\times H$ since $F$ is $S$-separable. Hence the group $D=g(C)$ is separable as well.
\end{proof}

Denote by $\mathfrak{S}$ the smallest class of topological groups which is generated 
by all compact separable groups, all countable groups and is closed under the 
operations listed in (1)--(3) of Proposition~\ref{operations}. It is not difficult to verify 
that if $G\in \mathfrak{S}$, then $G$ contains a compact separable subgroup $K$ 
such that the quotient space $G/K$ is countable. In the next problem we conjecture 
that this property characterizes the groups from $\mathfrak{S}$:
 
\begin{problem}\label{intr}
Is it  true that a topological group $G$ is in the class $\mathfrak{S}$ if and only if $G$ 
contains a compact separable subgroup $K$ such that the quotient space $G/K$ is 
countable?
\end{problem}

The theorem below generalizes both Theorem~\ref{motivation} and Proposition~\ref{countable}. 
It can be considered as a partial positive answer to Problem~\ref{intr}.

\begin{theorem}\label{class_S}
A topological group $G$ is $S$-separable provided it contains a separable compact 
subgroup $K$ such that the quotient space $G/K$ is countable.
\end{theorem}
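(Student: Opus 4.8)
The plan is to combine the two special cases already established—Theorem~\ref{motivation} (separable compact factors are $S$-separable) and Proposition~\ref{countable} (countable factors are $S$-separable)—using the structural hypothesis that $G$ contains a separable compact subgroup $K$ with $G/K$ countable. The natural strategy is to let $H$ be an arbitrary topological group in which all closed subgroups are separable, take a closed subgroup $C$ of $G\times H$, and show $C$ is separable by peeling off the compact part and the countable part in two stages.

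First I would set up the projection $p\colon G\times H\to (G/K)\times H$ induced by the canonical quotient homomorphism $q\colon G\to G/K$; explicitly, $p=q\times i_H$ is a continuous homomorphism. Since $K$ is compact, $q$ is a perfect (closed) map, and hence so is $p$. Therefore the image $p(C)$ is a closed subgroup of $(G/K)\times H$. Now $G/K$ is a countable topological group, so by Proposition~\ref{countable} the group $G/K$ is $S$-separable; consequently all closed subgroups of $(G/K)\times H$ are separable, and in particular $p(C)$ is separable.

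Next I would analyze the restriction $\pi=p\restriction_C\colon C\to p(C)$. Because $C$ is closed in $G\times H$ and $p$ is a closed map, $\pi$ is a continuous closed homomorphism onto the closed subgroup $p(C)$, hence a quotient map and therefore open by \cite[Proposition~1.5.14]{AT}. The kernel of $\pi$ is $C\cap(K\times\{e_H\})$, which is topologically isomorphic to a closed subgroup of the separable compact group $K$; by Theorem~\ref{compact_group} this kernel is separable. Thus in the short exact sequence determined by $\pi$ both the kernel and the quotient $p(C)$ are separable, so $C$ is separable because separability is a three-space property in topological groups \cite[Theorem~1.5.23]{AT}. Since $C$ was an arbitrary closed subgroup of $G\times H$ and $H$ an arbitrary group with all closed subgroups separable, this shows $G$ is $S$-separable.

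The main obstacle to watch is the verification that $p=q\times i_H$ is a closed map: this rests on $q$ being perfect, which follows from compactness of the fibers (cosets of $K$) together with Kuratowski's theorem, exactly as in the proof of Theorem~\ref{motivation}. One should confirm that the product of a perfect map with the identity remains closed, which is the standard fact underlying the earlier arguments; everything else is a routine reassembly of the three-space property with the two already-proved cases.
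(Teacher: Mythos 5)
Your argument has a genuine gap: it silently assumes that $K$ is a \emph{normal} subgroup of $G$. The theorem's hypothesis, in keeping with the paper's conventions (compare the definition of feathered groups in the Introduction), speaks of the \emph{quotient space} $G/K$, i.e.\ the left coset space; $K$ is not assumed normal. Your proof needs $G/K$ to be a countable topological \emph{group}: you form a ``quotient homomorphism'' $q\colon G\to G/K$, apply Proposition~\ref{countable} to conclude that $G/K$ is $S$-separable, view $p(C)$ as a closed \emph{subgroup} of $(G/K)\times H$, and invoke the three-space property for the homomorphism $\pi\colon C\to p(C)$ with kernel $C\cap(K\times\{e_H\})$. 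Every one of these steps collapses when $K$ is not normal: $G/K$ is then merely a Hausdorff coset space, $(G/K)\times H$ is not a group, $p$ is not a homomorphism, $p(C)$ is not a subgroup, and there is no kernel or exact sequence to apply \cite[Theorem~1.5.23]{AT} to. (The closedness of $p$ itself is not the problem: $G\to G/K$ is perfect for compact $K$ in either case.)

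The paper's proof avoids quotienting altogether by intersecting instead of projecting. Set $F=(K\times H)\cap C$; this is a closed subgroup of $K\times H$, hence separable by Theorem~\ref{motivation}. Then for any $x\in p(C)$, where now $p\colon G\times H\to G$ is the first projection, and any $z=(x,h)\in C$, one checks directly that $(xK\times H)\cap C=zF$. Since the coset space $G/K$ is countable, $C$ is covered by countably many translates $zF$, each homeomorphic to the separable group $F$, so $C$ is separable. This covering argument needs neither normality nor the three-space property. If you add the hypothesis that $K$ is normal in $G$, your two-stage argument is correct, but it then proves a strictly weaker statement than the theorem as stated.
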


\begin{proof}
Consider an arbitrary topological group $H$ such that all closed subgroups of $H$ 
are separable. Let $C$ be a closed subgroup of $G\times H$. It follows from 
Theorem~\ref{motivation} that the closed subgroup $F=(K\times H)\cap C$ of 
$K\times H$ is separable. Let $p\colon G\times H\to G$ be the projection onto 
the first factor. Take any point $x\in p(C)$ and choose an element $z=(x,h)\in C$. 
It is easy to see that $(xK\times H)\cap C=zF$. Since $F$ has countable index
in $G$, the latter equality implies that the group $C$ can be covered by countably 
many translates of the separable group $F$. Hence $C$ is separable as well.
We conclude therefore that $G$ is $S$-separable. 
\end{proof}

\begin{remark}
{\rm Each $G \in \mathfrak{S}$ is a separable $\sigma$-compact group, but
the group of reals $\mathbb{R}$ is not in the class $\mathfrak{S}$.
We do not know any example of an $S$-separable topological group 
which is not in the class $\mathfrak{S}$.
    
The main obstacle for resolving Problem~\ref{Prob:LL} is the fact that
the restriction of an open continuous homomorphism to a closed subgroup
can fail to be open, even if the restriction is considered as a mapping onto 
its image. This is an important issue since we use the fact that separability 
is a three-space property, while the corresponding homomorphism of a group 
onto its quotient group is open. We also note that there exists a continuous
one-to-one homomorphism of a non-separable precompact group onto a 
separable metrizable group (one can combine Theorems~9.9.30 and~9.9.38 
of \cite{AT}). In particular, the kernel of such a homomorphism is trivial and, 
hence, separable. So the preservation of separability under taking inverse 
images of a continuous homomorphism with a separable kernel depends 
essentially on whether the homomorphism is open or not.}
\end{remark}

A topological group $G$ is called {\it categorically compact} (briefly, $C$-compact),
if for every topological group $H$ the projection $G\times H \to H$ sends closed 
subgroups of $G\times H$ to closed subgroups of $H$ \cite{DikUsp}. It is known 
that $C$-compactness is preserved by continuous surjective homomorphisms and 
inherited by closed subgroups. 
D.~Dikranjan and V.~Uspenskij proved that the product of any family of $C$-compact 
groups is $C$-compact. A countable discrete group is $C$-compact if and only if it is 
hereditarily non-topologizable \cite{Lukacs}. Obviously, compact groups are $C$-compact 
and $C$-compactness of $G$ yields its compactness provided that the group $G$ is 
either soluble (in particular, abelian), or connected, or locally compact \cite{DikUsp}.

The long-standing problem of whether every $C$-compact group is compact has been 
recently resolved negatively in the article \cite{KOO}, where an infinite discrete $C$-compact
group is presented. Clearly this group is far from commutative or soluble. Thus, $C$-compact 
groups constitute a rich non-trivial class containing all compact groups as a proper subclass.

\begin{remark}
{\rm We do not know whether all separable $C$-compact topological groups are $S$-separable.}
\end{remark}

\section{Product of Two Pseudocompact Groups}\label{Sec:Groups}
In this section we present two pseudocompact abelian groups $G$ and $H$ such that
all closed subgroups of $G$ and $H$ are separable, but the product $G\times H$
contains a closed non-separable subgroup. 

First we recall that a {\em Boolean group} is a group in which all elements are of order 
two. Clearly, all Boolean groups are abelian. For each integer $n\geq 2$, $\mathbb{Z}(n)$ 
denotes the discrete group $\{0,1,\ldots,n-1\}$ with addition modulo $n$.
A non-empty subset $X$ of a Boolean group $G$ with identity 
$e$ is \emph{independent} if for any pairwise distinct elements $x_1,\ldots,x_n$ of $X$
 the equality $x_1+ \cdots + x_n=e$ implies that $x_1=\cdots= x_n=e$.

A family $\mathcal{V}$ of non-empty subsets of a topological space $X$ is
called a \emph{$\pi$-network} for $X$ if every non-empty open set $U \subset X$ 
contains an element of $\mathcal{V}$.

\begin{lemma}\label{pinet_groups}
Let $\kappa$ be a cardinal satisfying $\omega\leq\kappa\leq\cont$. Then the compact 
Boolean group $C=\mathbb{Z}(2)^\kappa$ has a countable $\pi$-network $\mathcal{V}=
\{V_n: n\in\omega\}$ such that $|V_n|\geq 2^\omega$ for each $n\in\omega$.
\end{lemma}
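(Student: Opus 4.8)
The plan is to exploit the fact that $\kappa \leq \cont$, so the compact group $C = \mathbb{Z}(2)^\kappa$ has weight at most $\cont$ and is therefore separable by Theorem~\ref{compact_group}. Let me think about what a $\pi$-network of the required form would look like and how to build it from a countable dense set together with the product structure.

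First I would fix a countable dense subset $D = \{d_n : n \in \omega\}$ of $C$, which exists by separability. The naive idea of taking $\mathcal{V} = \{\{d_n\} : n \in \omega\}$ fails immediately because singletons have cardinality $1$, not $\geq 2^\omega$. So the real task is to find, for each basic open set, a \emph{large} subset sitting inside it, and to do so using only countably many such large sets. The key observation is that the canonical base for the Tychonoff topology on $C$ consists of sets determined by finitely many coordinates: a basic open set has the form $W = \{x \in C : x\restriction F = s\}$ for some finite $F \subset \kappa$ and $s \in \mathbb{Z}(2)^F$. Such a $W$ is itself a copy of $\mathbb{Z}(2)^{\kappa \setminus F}$, hence has cardinality $2^\kappa \geq 2^\omega$, and is open, so it contains a full copy of the remaining product and is therefore enormous.

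The strategy is then to build the $\pi$-network so that each $V_n$ is a large subset contained in some basic open set, arranging that every basic open set contains one of the $V_n$. The obstacle is that there are $\kappa$-many, hence possibly continuum-many, finite subsets $F$ of $\kappa$, so one cannot simply enumerate all basic open sets in a sequence of length $\omega$. Here is where I would use the dense set $D$ to reduce to countably many ``anchors.'' For each $n \in \omega$, I intend to define $V_n$ to be a large subset of $C$ clustering at $d_n$; concretely, one can take a countable local structure around each $d_n$. The cleanest route is to invoke that $C$ is separable and has a countable $\pi$-base's worth of behavior captured as follows: since every non-empty open $U$ contains a basic open set $W$ as above, and $W$ in turn meets $D$ (by density) in some point $d_n$, it suffices to attach to each $d_n$ a large set $V_n$ that is guaranteed to land inside \emph{every} sufficiently small basic neighborhood witnessed by $d_n$.

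The hard part, and the step I expect to require the most care, is guaranteeing simultaneously that each $V_n$ has size $\geq 2^\omega$ \emph{and} that $V_n$ is contained in the relevant open set so that the $\pi$-network property holds. The likely resolution is to work with the big ``tail'' factors: split $\kappa$ into a countable ``control'' part handled by the $d_n$ and an uncountable ``free'' part of size $\geq \omega$ (possible since $\kappa \geq \omega$) over which each $V_n$ ranges freely. For a basic open set $W$ constrained on a finite coordinate set $F$, one chooses $d_n \in W \cap D$ and lets $V_n$ be the set of all points agreeing with $d_n$ on a fixed countably infinite coordinate block disjoint from any single finite $F$ while varying freely on an infinite complementary block; freeness on an infinite block forces $|V_n| \geq 2^\omega$, and agreement with $d_n$ on a cofinite-enough block keeps $V_n \subset W$. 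Making the bookkeeping of these coordinate blocks uniform in $n$ while covering all finite $F$ is the technical crux, and I would handle it by a careful once-and-for-all partition of $\kappa$ into countably many infinite blocks and defining the $V_n$ relative to this fixed partition.
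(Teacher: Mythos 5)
Your proposal is correct, but it takes a genuinely different route from the paper's proof. Written out, your construction is: fix a countable dense set $D=\{d_n: n\in\omega\}$ in $C$ (it exists by Theorem~\ref{compact_group}, or directly from the Hewitt--Marczewski--Pondiczery Theorem~\ref{HMP}, since $\kappa\le\cont$), fix once and for all a partition $\kappa=\bigcup_{k\in\omega}B_k$ into countably many pairwise disjoint infinite blocks, and for each pair $(n,k)\in\omega\times\omega$ put
\[
V_{n,k}=\{x\in C:\ x(\alpha)=d_n(\alpha)\ \mbox{for all}\ \alpha\in\kappa\setminus B_k\}.
\]
Each $V_{n,k}$ is a copy of $\mathbb{Z}(2)^{B_k}$, so $|V_{n,k}|=2^{|B_k|}\ge 2^\omega$; and if $W$ is a basic open set determined by values on a finite $F\subset\kappa$, then $F$ meets only finitely many blocks, so $F\cap B_k=\emptyset$ for some $k$, and picking $d_n\in W\cap D$ gives $V_{n,k}\subset W$. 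The paper does something else: it never uses separability of $C$ at all. Instead it identifies $\kappa$ with a dense subset of the interval $(0,1)$ and takes as its $\pi$-network the sets obtained by prescribing a constant value $0$ or $1$ on each piece $A_i\cap\kappa$ of a finite disjoint union of rational intervals; a finite coordinate set $F$ is then separated by small disjoint rational intervals, which gives the $\pi$-network property. So the paper trades your appeal to the (nontrivial) separability of $\mathbb{Z}(2)^\kappa$ for the order structure of $(0,1)$, keeping the lemma self-contained, and the same interval device is reused in Lemma~\ref{prop:pinet} for $\mathbb{R}^\kappa$. Your version buys a one-line pigeonhole argument in place of the interval bookkeeping, makes the cardinality estimate automatic (the free block is infinite by construction, whereas in the paper one must choose the intervals so that infinitely many coordinates remain unconstrained), and visibly generalizes to any separable product of infinitely many nontrivial factors.

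Two points to fix when writing this up. In the middle of your sketch the roles of agreement and freedom are interchanged ("agreeing with $d_n$ on a fixed countably infinite coordinate block \ldots while varying freely on an infinite complementary block"): if the free part were co-countable, $V$ would escape $W$ whenever $F$ met it. The correct formulation, which your final sentence does reach, is agreement with $d_n$ \emph{off} one block $B_k$ and freedom \emph{on} $B_k$. Also, the family must be indexed by pairs $(n,k)$, since the block avoiding $F$ and the dense point lying in $W$ are chosen independently, and one should state explicitly the pigeonhole fact that a finite set is disjoint from some block. With these adjustments your argument is complete.
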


\begin{proof} 
Identify $\kappa$ with a dense subset of the open interval $(0,1)$ and fix a countable 
family $\mathcal{T}$ consisting of the sets of the form $A\cap \kappa$, with $A$ being 
a disjoint finite union of open intervals with rational  end-points in $(0,1)$. For every set 
$A_1\cup A_2\cup\cdots \cup A_n \in \mathcal{T}$ and a finite collection $\{B_1, B_2,\ldots, 
B_n\}$, where each $B_i = \{0\}\,\, \mbox{or} \,\,\{1\}$, we define the set
$$
V= \{x\in \Pi: x(\alpha) \in B_i \,\, \mbox{for each} \,\, \alpha \in A_i\}.
$$
It is easy to verify that the family $\mathcal{V}$ consisting of all such sets $V$ is a 
countable $\pi$-network for the space $C$. The cardinality of each $V_n$ is at least 
$2^\omega=\cont$ because $V_n$ contains a copy of $\mathbb{Z}(2)^\omega$.
\end{proof}

\begin{proposition}\label{Pro:1}
Let $\kappa$ be a cardinal satisfying $\omega\leq\kappa\leq\cont$ and $S$ be a subgroup 
of the compact Boolean group $C=\mathbb{Z}(2)^\kappa$ with $|S|<\cont$. Then $C$ contains 
a countable dense independent subset $X$ of $C$ such that $\hull{X}\cap S=\{e\}$.
\end{proposition}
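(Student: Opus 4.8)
The plan is to build the countable dense independent set $X=\{x_n:n\in\omega\}$ by a straightforward recursion, selecting one point at a time from the sets $V_n$ furnished by the $\pi$-network of Lemma~\ref{pinet_groups}, while maintaining two invariants: that the partial set stays independent, and that its linear span meets $S$ only in $\{e\}$. Density will come for free from the $\pi$-network, since choosing $x_n\in V_n$ for each $n$ guarantees that $X$ meets every non-empty open subset of $C$; indeed each such open set contains some $V_m$, hence contains $x_m\in X$. So the entire content is to show the recursion can be carried out without ever getting stuck.

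Concretely, suppose at stage $n$ we have chosen an independent set $\{x_0,\dots,x_{n-1}\}$ with $\hull{x_0,\dots,x_{n-1}}\cap S=\{e\}$. The subgroup $H_{n-1}=\hull{x_0,\dots,x_{n-1},S}$ generated by the points chosen so far together with $S$ has cardinality $|S|\cdot 2^{n}<\cont$, because $|S|<\cont$ and we are adjoining only finitely many generators over a Boolean group (so the span of the $x_i$ is finite). We want to choose $x_n\in V_n$ avoiding this ``forbidden'' set $H_{n-1}$. By Lemma~\ref{pinet_groups} we have $|V_n|\geq 2^\omega=\cont$, and since $|H_{n-1}|<\cont$, the difference $V_n\setminus H_{n-1}$ is non-empty; pick any $x_n$ in it. The key point to verify is that this single requirement $x_n\notin H_{n-1}$ simultaneously preserves both invariants.

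Here is why $x_n\notin H_{n-1}$ suffices. In a Boolean group, a finite set $\{x_0,\dots,x_n\}$ (of non-identity elements) is independent precisely when no one of them lies in the span of the others; so to extend independence it is enough that $x_n\notin\hull{x_0,\dots,x_{n-1}}$, which holds since $x_n\notin H_{n-1}\supseteq\hull{x_0,\dots,x_{n-1}}$. For the second invariant, suppose some non-zero element $g\in\hull{x_0,\dots,x_n}$ lies in $S$. Write $g$ as a sum of a subset of $\{x_0,\dots,x_n\}$. If this subset omits $x_n$, then $g\in\hull{x_0,\dots,x_{n-1}}\cap S=\{e\}$ by the inductive hypothesis, so $g=e$, contradicting $g\neq e$. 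If the subset contains $x_n$, we may solve for $x_n$ and obtain $x_n=g+\sum_{i\in F}x_i$ for some $F\subseteq\{0,\dots,n-1\}$; but then $x_n\in\hull{S,x_0,\dots,x_{n-1}}=H_{n-1}$, contradicting our choice. Hence $\hull{x_0,\dots,x_n}\cap S=\{e\}$, and both invariants persist.

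The main (and only) obstacle is the cardinality bookkeeping ensuring $V_n\setminus H_{n-1}\neq\emptyset$ at every stage, which is exactly where the hypothesis $|S|<\cont$ and the estimate $|V_n|\geq\cont$ from Lemma~\ref{pinet_groups} are used; everything else is the routine Boolean-group linear algebra sketched above. Having completed the recursion, the resulting set $X=\{x_n:n\in\omega\}$ is countable, independent, meets every non-empty open set (density), and satisfies $\hull{X}\cap S=\bigcup_n\hull{x_0,\dots,x_n}\cap S=\{e\}$ since the spans form an increasing chain and each term meets $S$ trivially. This yields the desired $X$.
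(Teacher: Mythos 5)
Your proof is correct and follows essentially the same route as the paper: a recursion through the countable $\pi$-network of Lemma~\ref{pinet_groups}, choosing $x_n\in V_n$ outside the small subgroup $S+\hull{x_0,\ldots,x_{n-1}}$, with density automatic and the cardinality bound $|V_n|\geq\cont>|S+\hull{x_0,\ldots,x_{n-1}}|$ making each choice possible. The only difference is that you spell out the routine verification of independence and of $\hull{X}\cap S=\{e\}$, which the paper leaves implicit.
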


\begin{proof}
Evidently, if $x\in C\setminus S$, then $\hull{x}\cap S=\{e\}$. Let 
$\mathcal{V} =\{V_n: n\in\omega\}$ be a countable $\pi$-network for $C$ such that 
every $V_n$ has cardinality at least $2^\omega=\cont$ (see Lemma~\ref{pinet_groups}).
Take an element $x_0\in V_0\setminus S$. Then $\hull{x_0}\cap S=\{e\}$. Similarly, take 
an element $x_1\in V_1\setminus (S+\hull{x_0})$. Again, this is possible since 
$|S+\hull{x_0}|<\cont$. In general, if elements $x_0,x_1,\ldots,x_{n-1}$ of $C$ have been 
defined, we choose an element $x_{n}\in V_{n}\setminus (S+\hull{x_0,x_1,\ldots,x_{n-1}})$. 
This choice guarantees that $\hull{x_0,x_1,\ldots,x_{n}}\cap S=\{e\}$
and that the set $\{x_0,x_1,\ldots,x_n\}$ is independent.

Let $X=\{x_n: n\in\omega\}$ and $Q$ be the subgroup of $C$ generated by $X$. 
Notice that the set $X$ is independent. Since $x_n\in V_n$ for each $n\in\omega$, 
we see that $X$ is dense in $C$. It is also clear that $Q\cap S = \{e\}$. This completes 
the proof. 
\end{proof}

\begin{remark}
{\rm Proposition~\ref{Pro:1} cannot be extended to compact metrizable bounded torsion 
groups. Indeed, let $G=\mathbb{Z}(2)^\omega\times\mathbb{Z}(4)$. Clearly $G$ is a compact
metrizable group of period $4$. Let $S=\{\bar{0}\}\times\mathbb{Z}(4)$, where $\bar{0}$ is 
the identity element of $\mathbb{Z}(2)^\omega$. Then every dense subgroup $D$ of $G$ 
has a non-trivial intersection with the finite group $S$. To see this, consider the open subset
  $U=\mathbb{Z}(2)^\omega\times\{1\}$ of $G$. Since $D$ is dense in $G$, there exists an 
element $x\in D\cap U$. Clearly the element $2x$ is distinct from the identity of $G$ and 
$2x=(\bar{0},2)\in D\cap S$.}
\end{remark}

\begin{theorem}\label{Th:Gr}
Assume that $2^{\omega_1}=\cont$. Then there exist pseudocompact abelian topological 
groups $G$ and $H$ such that all closed subgroups of $G$ and $H$ are separable, but 
the product $G\times H$ contains a closed non-separable $\sigma$-compact subgroup.
\end{theorem}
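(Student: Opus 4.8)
The plan is to construct $G$ and $H$ as dense subgroups of a common compact Boolean group of the form $C=\mathbb{Z}(2)^{\cont}$, arranged so that each individually has all closed subgroups separable while their product captures a prescribed non-separable closed subgroup. Concretely, I would first fix a closed non-separable $\sigma$-compact group $L$ that I want to embed diagonally into $G\times H$; a natural candidate is $L=\mathbb{Z}(2)^{(\omega_1)}$ endowed with a suitable $\sigma$-compact topology, or more precisely a $\sigma$-compact dense subgroup of $\mathbb{Z}(2)^{\omega_1}$ of weight $\omega_1$, which is non-separable since by Theorem~\ref{compact_group} a compact Boolean group of weight $\omega_1>\ldots$ fails separability only when its weight exceeds $\cont$, so instead non-separability will have to come from the $\sigma$-compact (not compact) structure together with an uncountable independent family. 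The key quantitative input is the hypothesis $2^{\omega_1}=\cont$, which guarantees there are exactly $\cont$-many subsets of $\omega_1$ to diagonalize against, matching the $\cont$ coordinates available in $C$.

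The heart of the construction is a transfinite recursion of length $\cont$. I would enumerate all the ``threats'' to separability of closed subgroups of $G$ (respectively $H$) and all the potential closed subgroups that need to be handled, using $2^{\omega_1}=\cont$ to bound the number of relevant closed $\sigma$-compact subgroups or of candidate dense countable sets by $\cont$. At each stage I build up $G$ and $H$ as increasing unions of countable (hence separable) pieces, invoking Proposition~\ref{Pro:1} repeatedly: given the portion $S$ of the group already committed (with $|S|<\cont$), Proposition~\ref{Pro:1} furnishes a countable dense independent set $X$ with $\hull{X}\cap S=\{e\}$, which lets me enlarge the group densely while keeping algebraic independence from the dangerous subgroup. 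The independence condition is what will let me argue that every closed subgroup of $G$ (or $H$) meets the committed dense countable skeleton in a dense set, forcing separability of that closed subgroup.

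To make $G$ and $H$ pseudocompact I would use Theorem~\ref{projections1}: arrange that $G$ and $H$ are subgroups of products of compact metrizable groups (the factors being copies of $\mathbb{Z}(2)$) that surject onto every countable subproduct, i.e. that fill all countable faces. This is a standard feature to build into the recursion\,---\,at each countable face I simply ensure the partially-built group projects onto the whole face\,---\,and it yields both pseudocompactness and the identification of $C$ as the \v{C}ech--Stone compactification of each of $G$ and $H$. Meanwhile the product $G\times H$ is to be engineered so that a specific diagonal-type closed subgroup $D\subseteq G\times H$ (built from the uncountable independent family reserved during the recursion) is $\sigma$-compact, closed, and has uncountable weight with no countable dense subset.

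The main obstacle is the delicate bookkeeping that simultaneously (i) keeps every closed subgroup of each individual factor separable and (ii) forces the product to contain a non-separable closed subgroup\,---\,these pull in opposite directions, and the whole point is that separability of closed subgroups is not productive. The crux is to reserve, throughout the recursion, an uncountable independent family $\{a_\alpha:\alpha<\omega_1\}\subseteq G$ and $\{b_\alpha:\alpha<\omega_1\}\subseteq H$ whose ``graph'' $\{(a_\alpha,b_\alpha):\alpha<\omega_1\}$ generates the desired closed non-separable $\sigma$-compact $D$, while ensuring that neither family alone generates anything non-separable in its own factor (this is exactly where Proposition~\ref{Pro:1} and the independence it provides, combined with the face-filling condition and $2^{\omega_1}=\cont$, must be balanced against the list of closed subgroups to be tamed). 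Verifying that $D$ is genuinely closed in $G\times H$ and genuinely non-separable, rather than accidentally acquiring a countable dense subset through the ambient topology, will require the most care.
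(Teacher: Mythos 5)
Your overall strategy is indeed the paper's: realize the non-separable subgroup as the diagonal copy of $G\cap H$ (which should be the finite-support subgroup $\sigma$ of a compact Boolean group), run a recursion of length $\cont$ using Proposition~\ref{Pro:1}, and get pseudocompactness by filling all countable faces and applying Theorem~\ref{projections1}. However, there is a concrete error that breaks the bookkeeping: you take the ambient group to be $C=\mathbb{Z}(2)^{\cont}$. The recursion must, within its $\cont$ stages, ``handle'' every infinite closed subgroup of the ambient group, because the only way this method certifies separability of an arbitrary closed subgroup $F$ of $G$ is that its closure $\overline{F}$ in the ambient group was enumerated as some $C_\alpha$ and a countable dense subgroup of $C_\alpha$ was planted inside $G$ at stage $\alpha$ (that subgroup then lies in $F=G\cap\overline{F}$ and is dense in $F$). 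But $\mathbb{Z}(2)^{\cont}$ has $2^{\cont}>\cont$ closed subgroups (already the subproducts $\mathbb{Z}(2)^{A}\times\{0\}^{\cont\setminus A}$, $A\subseteq\cont$, give that many), so they cannot be listed in $\cont$ steps, and the hypothesis $2^{\omega_1}=\cont$ gives no control at weight $\cont$. That hypothesis is tailored precisely to the ambient group $\mathbb{Z}(2)^{\omega_1}$, which is what the paper uses: a compact group of weight $\omega_1$ has at most $2^{\omega_1}=\cont$ closed subsets, so its infinite closed subgroups can be enumerated as $\{C_\alpha:\alpha<\cont\}$. Your sketch is also internally inconsistent on this point, since your candidate non-separable subgroup lives in $\mathbb{Z}(2)^{\omega_1}$ while $G$ and $H$ live in $\mathbb{Z}(2)^{\cont}$.

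Second, you misattribute the mechanism that makes all closed subgroups of each factor separable. You write that the independence supplied by Proposition~\ref{Pro:1} ``is what will let me argue that every closed subgroup of $G$ meets the committed dense countable skeleton in a dense set.'' Independence does no such work: it only guarantees that the planted countable groups meet the previously built $G_\nu+H_\nu$ trivially, so that $G\cap H$ never grows beyond $\sigma$; that equality is what makes the diagonal subgroup non-separable, while its closedness is automatic (the diagonal $\Delta$ is closed in the square of the ambient compact group, so $\Delta\cap(G\times H)$ is closed in $G\times H$) --- this is the part you expected to require the most care, whereas the genuine delicacy is maintaining $G\cap H=\sigma$ at every stage while planting dense subgroups of each $C_\alpha$ into \emph{both} factors (this must be done successively, enlarging $G$ first and then choosing the countable dense subgroup for $H$ so that it avoids the already enlarged $G$). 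Separability of closed subgroups comes solely from the planting step, in which Proposition~\ref{Pro:1} is applied \emph{inside} each enumerated $C_\alpha$ (legitimate because a closed subgroup of a compact Boolean group is again a compact Boolean group, hence of the form $\mathbb{Z}(2)^{\lambda}$ with $\lambda\leq\omega_1\leq\cont$), not to the ambient group. Your proposal never states this planting step, and without it there is no argument at all that an arbitrary closed subgroup of $G$ or $H$ is separable.
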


\begin{proof}
We will construct $G$ and $H$ as dense subgroups of the compact Boolean 
group $\Pi=\mathbb{Z}(2)^{\omega_1}$. For every $\alpha\in\omega_1$, we denote by 
$p_\alpha$ the projection of $\Pi$ onto the $\alpha$-th factor, $p_\alpha(x)=x(\alpha)$ 
for each $x\in \Pi$. Given an element $x\in\Pi$, let 
$$
\supp(x)=\{\alpha\in\omega_1: p_\alpha(x)=1\}.
$$ 
Then the set
$$
\sigma=\{x\in\Pi: |\supp(x)|<\omega\}
$$
is a dense subgroup of $\Pi$ satisfying $|\sigma|=\omega_1$. It is easy to verify that
the group $\sigma$ with the topology inherited from $\Pi$ is $\sigma$-compact and not 
separable.

Our aim is to define the subgroups $G$ and $H$ of $\Pi$ satisfying the equality 
$G\cap H=\sigma$. It is clear that $\Delta=\{(x,x): x\in \Pi\}$, the diagonal in 
$\Pi\times \Pi$, is a closed subgroup of $\Pi\times \Pi$, so $\Delta\cap (G\times H)$ 
is a closed non-separable subgroup of $G\times H$ which is isomorphic to $\sigma$.

We define the groups $G$ and $H$ by recursion of length $\cont$. It follows from 
$2^{\omega_1}=\cont$ that the family of all closed subsets of $\Pi$ has cardinality 
$2^{\omega_1}=\cont$. Hence we can enumerate all infinite closed subgroups of 
$\Pi$, say, $\{C_\alpha: \alpha<\cont\}$. For every countable subset $B$ of $\omega_1$, 
the set $\mathbb{Z}(2)^B$ has cardinality at most $\cont$, so we can enumerate the 
set $\Sigma=\bigcup\{\mathbb{Z}(2)^B: B\subset\omega_1,\ 1\leq |B|\leq\omega\}$ 
as $\Sigma=\{b_\alpha: \alpha<\cont\}$. For every $\alpha<\cont$, let $B_\alpha$ be
a countable subset of $\omega_1$ such that $b_\alpha\in \mathbb{Z}(2)^{B_\alpha}$. 
The two enumerations will be used in our construction of $G$ and $H$. For every
non-empty subset $B$ of $\omega_1$, we denote the projection of 
$\Pi=\mathbb{Z}(2)^{\omega_1}$ onto $\mathbb{Z}(2)^B$ by $\pi_B$.

We start with putting $G_0=H_0=\sigma$. Let $\alpha$ be an ordinal, $0<\alpha<\cont$.
Assume that we have defined subgroups $G_\beta$ and $H_\beta$ of $\Pi$, for each 
$\beta<\alpha$, such that the following conditions hold:
\begin{enumerate}
\item[(i)]   $G_\gamma\subset G_\beta$ and $H_\gamma\subset H_\beta$ if $\gamma<\beta$;
\item[(ii)]  $|G_\beta|\leq |\beta+1|\cdot\omega_1$ and $|H_\beta|\leq |\beta+1|\cdot\omega_1$; 
\item[(iii)] $G_\beta\cap H_\beta=\sigma$;
\item[(iv)] $b_\gamma\in \pi_{B_\beta}(G_\beta)\cap \pi_{B_\beta}(H_\beta)$ for each 
                $\gamma<\beta$;
\item[(v)]  both $G_\beta\cap C_\gamma$ and $H_\beta\cap C_\gamma$ contain 
                a countable dense subgroup of $C_\gamma$, for each $\gamma<\beta$.
\end{enumerate}
If $\alpha$ is a limit ordinal, we put $G_\alpha=\bigcup_{\beta<\alpha} G_\beta$ 
and $H_\alpha=\bigcup_{\beta<\alpha} H_\beta$. It is clear that the families 
$\{G_\beta: \beta\leq\alpha\}$ and $\{H_\beta: \beta\leq\alpha\}$ satisfy conditions
(i)--(iv). 

Assume that $\alpha$ is a successor ordinal, say, $\alpha=\nu+1$. First we define 
a subgroup $G_\alpha$ of $\Pi$. It follows from (ii) that $|G_\nu|\leq |\nu|\cdot\omega_1$ 
and $|H_\nu|\leq |\nu|\cdot\omega_1$.
It is known that every compact Boolean group is topologically isomorphic to
the group $\mathbb{Z}(2)^\lambda$ for some cardinal $\lambda$ (this is a simple 
corollary of the Pontryagin--Van Kampen's duality theory, see \cite[Chapter 5]{Morris}).
 Hence one can apply Proposition~\ref{Pro:1} with $S=G_\nu + H_\nu$ to find a 
 countable dense subgroup $Q_\nu$ of a compact Boolean group $C_\nu$ such 
that the intersection of $Q_\nu$ and $S$ is trivial. This implies the equality
$$
(G_\nu+Q_\nu)\cap (H_\nu+Q_\nu) = G_\nu \cap H_\nu = \sigma. 
$$
Let $G'_\nu=G_\nu+Q_\nu$ and $H'_\nu=H_\nu+Q_\nu$. By (ii), we have that 
$|G'_\nu|\leq |\nu+1|\cdot \omega_1$ and $|H'_\nu|\leq |\nu+1|\cdot \omega_1$.
Since $Q_\nu\subset G'_\nu\cap H'_\nu$, both intersections $G'_\nu\cap C_\nu$
and $H'_\nu\cap C_\nu$ contain the countable dense subgroup $Q_\mu$ of $C_\nu$.
Denote by $P_\nu$ the set $\{x\in\Pi: \pi_{B_\nu}(x)=b_\nu\}$. Then $|P_\nu|=\cont$,
while $|G'_\nu|\cdot|H'_\nu|<\cont$. Hence we can choose an element $x_\nu\in P_\nu$ 
such that $x_\nu\notin G'_\nu + H'_\nu$. We put $G_\alpha=G'_\nu + \hull{x_\nu}$. It
follows from our choice of $x_\nu$ that $G_\alpha\cap H'_\nu=\sigma$. Similarly,
one can choose $y_\nu\in P_\nu$ such that $y_\nu\notin G_\alpha + H'_\nu$, and
we put $H_\alpha=H'_\nu + \hull{y_\nu}$. Again, our choice of $y_\nu$ implies that
$G_\alpha\cap H_\alpha=\sigma$ and, clearly, $b_\nu\in \pi_{B_\nu}(G_\alpha)
\cap \pi_{B_\nu}(H_\alpha)$. Therefore, the families $\{G_\beta: \beta\leq\alpha\}$ 
and $\{H_\beta: \beta\leq\alpha\}$ satisfy conditions (i)--(v). 

Finally we define subgroups $G$ and $H$ of $\Pi$ by letting $G=\bigcup_{\alpha<\cont} G_\alpha$
and $H=\bigcup_{\alpha<\cont} H_\alpha$. Then (i) and (iii) together imply that $G\cap H=\sigma$. 
We claim that all closed subgroups of $G$ and $H$ are separable. Clearly it suffices to verify
this only for $G$. Let $F$ be an infinite closed subgroup of $G$. Then the closure of $F$ in
$\Pi$, say, $\overline{F}$ is an infinite closed subgroup of $\Pi$, so $\overline{F}=C_\alpha$,
for some $\alpha<\cont$. It follows from (v) that $G_{\alpha+1}\cap C_\alpha$ contains a
countable dense subgroup of $C_\alpha$, and so does $G\cap C_\alpha=G\cap \overline{F}=F$.
Therefore the group $F$ is separable, as claimed.

It remains to show that the groups $G$ and $H$ are pseudocompact. Let us recall that
$\{b_\nu: \nu<\cont\}$ is an enumeration of all countable subproducts in $\Pi=\mathbb{Z}(2)^{\omega_1}$, so (iv) implies that $\pi_B(G)=\pi_B(H)=D^B$ for each non-empty countable 
set $B\subset\omega_1$. Since $\mathbb{Z}(2)$ is a compact metrizable group, the 
pseudocompactness of both $G$ and $H$ follows directly from Theorem~\ref{projections1}.
\end{proof}

\section{Product of Two Pseudocomplete LCS}\label{Sec:LCS}
First, we present a result similar to Proposition~\ref{countable}.

\begin{proposition}\label{findim}
Let $K$ be a finite-dimensional Banach space and $L$ be a topological vector space 
in which all closed vector subspaces are separable. Then all closed vector subspaces 
of the product $K\times L$ are separable as well.
\end{proposition}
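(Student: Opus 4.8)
The plan is to mimic the structure of the proof of Proposition~\ref{countable}, replacing the countability of the factor $G$ by the finite dimensionality of $K$, and replacing the partition into countably many separable fibers by a covering argument based on finite-dimensionality. The key observation is that a finite-dimensional Banach space $K$ is locally compact, so the projection $p\colon K\times L\to L$ onto the second factor is a perfect (in particular, closed) map by the Kuratowski/tube lemma argument, exactly as in Theorem~\ref{motivation} and Proposition~\ref{Pro:CC}.

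First I would take an arbitrary closed vector subspace $C$ of $K\times L$ and let $\pi$ be the restriction to $C$ of the projection $p\colon K\times L\to L$. Since $K$ is compact-covering (locally compact), $p$ is a closed mapping, and as $C$ is closed, $\pi\colon C\to \pi(C)$ is a closed continuous linear map onto its image. The image $D=\pi(C)$ is a closed vector subspace of $L$, hence separable by hypothesis. Being a continuous open surjective homomorphism of topological groups (the restriction of a closed continuous homomorphism onto its image is open by \cite[Proposition~1.5.14]{AT}), $\pi$ realizes $D$ as a quotient. The kernel $N=\ker\pi$ is a closed vector subspace of $C$ contained in $K\times\{0\}$, hence topologically isomorphic to a vector subspace of $K$; since $\dim K<\infty$, the space $N$ is finite-dimensional and therefore separable (indeed second countable).

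At this point separability of $C$ follows from the three-space property: $N$ is separable, $D=C/N$ is separable, and separability is a three-space property in topological groups by \cite[Theorem~1.5.23]{AT}, so $C$ is separable. This is the cleanest route and parallels the final line of Theorem~\ref{motivation} verbatim. The only point requiring a moment's care\,---\,and the main (minor) obstacle\,---\,is confirming that $p\colon K\times L\to L$ is genuinely a closed map: this is where local compactness of $K$ is used, via the tube lemma exactly as cited in Proposition~\ref{Pro:CC} through \cite[Theorem~3.10.7]{Eng89}. Everything else is a formal repetition of the earlier argument, so I would keep the write-up short and refer back to the proof of Theorem~\ref{motivation} for the open-mapping and three-space steps rather than reproving them.
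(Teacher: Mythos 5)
Your proposal has a genuine gap at its central step: the claim that the projection $p\colon K\times L\to L$ is closed (let alone perfect) because $K$ is locally compact is false. Kuratowski's theorem and the tube lemma require the factor being projected away to be \emph{compact}; local compactness does not suffice, and neither of the results you invoke applies here (\cite[Theorem~3.1.16]{Eng89} needs a compact factor, while \cite[Theorem~3.10.7]{Eng89}, the one used in Proposition~\ref{Pro:CC}, needs a countably compact factor together with a metrizable/sequential second factor). Concretely, take $K=L=\mathbb{R}$: the hyperbola $\{(x,y)\in\mathbb{R}^{2}: xy=1\}$ is closed in $K\times L$, but its projection to $L$ is $\mathbb{R}\setminus\{0\}$, which is not closed; moreover the fibers of $p$ are copies of $K$, so $p$ is certainly not perfect. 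Since $C=K\times L$ is itself a closed vector subspace, this example also shows that the restriction of $p$ to a closed vector subspace need not be a closed map, so the subsequent appeal to \cite[Proposition~1.5.14]{AT} (closed continuous homomorphism onto its image, hence open) is unjustified, and the argument collapses at this point.

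The two facts you actually need\,---\,that $D=\pi(C)$ is closed in $L$ and that $\pi\colon C\to D$ is open\,---\,do happen to be true, but for reasons specific to topological vector spaces rather than general topology: in a Hausdorff TVS the sum of a closed subspace and a finite-dimensional subspace is closed, and a finite-dimensional subspace meeting a closed subspace trivially splits off as a topological direct summand of their sum; from these one can check that the quotient of $K\times L$ by $K\times\{0\}$, restricted to $C$, is open onto its closed image. Repairing your proof along these lines is possible but laborious. The paper avoids the issue entirely by projecting onto the \emph{finite-dimensional} factor $K$ instead of onto $L$: the image $D=\pi(C)\subseteq K$ is then automatically finite-dimensional, hence separable (no closedness of the image, and no hypothesis on $L$, is needed at this stage); the map $\pi\colon C\to D$ is open by the standard fact that every continuous linear surjection onto a finite-dimensional Hausdorff space is open (see \cite{Rudin}); and the kernel of $\pi$ is a closed vector subspace of $L$, hence separable by hypothesis. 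The three-space property then concludes the proof exactly as in your final paragraph.
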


\begin{proof}
Take a closed vector subspace $C$ of $K\times L$ and let $\pi$ be the restriction 
of the projection $K\times L\to K$ to $C$. Mapping $\pi$ is linear, hence the image 
$D=\pi(C)$ is a finite-dimensional Banach space. It is widely known that any continuous 
linear mapping onto a finite-dimensional Banach space is an open mapping (see \cite{Rudin}). 
So, $\pi\colon C\to D$ is a linear open mapping onto the  separable space $D$. Again, 
the kernel of $\pi$ is linearly isomorphic to a closed vector subspace of $L$, therefore 
the kernel of $\pi$ is separable, by the assumptions about $L$.
Finally, $C$ is also separable because separability is a three-space property. 
\end{proof}

\begin{remark}
{\rm We do not know whether Proposition~\ref{findim} remains valid for
an arbitrary separable Banach space $K$.}
\end{remark}

We show in Theorem~\ref{Th:LCS} below that the answer to Problem~\ref{Prob:LCS} 
is \lq\lq{Yes\rq\rq} under the assumption that $2^{\omega_1}=\cont$. In other words,
we present locally convex spaces $K$ and $L$ such that all closed vector subspaces
of $K$ and $L$ are separable, but the product $K\times L$ contains a closed non-separable
vector subspace. In addition, the spaces $K$ and $L$ are pseudocomplete.

First we establish two auxiliary facts about the properties 
of $\R^{\kappa}$, where $\omega\leq\kappa\leq\cont$. They are analogous to 
Lemma~\ref{pinet_groups} and Proposition~\ref{Pro:1} and will play a similar role.

\begin{lemma}\label{prop:pinet}
Let $\kappa$ be a cardinal satisfying $\omega\leq\kappa\leq\cont$. Then
the space $\Pi = \R^{\kappa}$ has a countable $\pi$-network $\mathcal{V}$ 
such that every element $V\in\mathcal{V}$ contains a linearly independent 
subset of size at least $\cont$.
\end{lemma}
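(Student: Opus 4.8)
The plan is to construct the countable $\pi$-network $\mathcal{V}$ for $\Pi=\R^\kappa$ explicitly, mimicking the proof of Lemma~\ref{pinet_groups} but replacing the two-point sets $B_i=\{0\}$ or $\{1\}$ by open boxes with rational endpoints. First I would identify $\kappa$ with a dense subset of $(0,1)$ and fix the same countable family $\mathcal{T}$ consisting of sets $A\cap\kappa$, where $A$ is a disjoint finite union of open intervals with rational endpoints in $(0,1)$. Then, for each member $A_1\cup\cdots\cup A_n\in\mathcal{T}$ and each finite collection of bounded open intervals $J_1,\ldots,J_n$ with rational endpoints in $\R$, I would define
\[
V=\{x\in\Pi: x(\alpha)\in J_i\text{ for each }\alpha\in A_i\text{, }i=1,\ldots,n\}.
\]
There are only countably many such sets, so the family $\mathcal{V}$ of all of them is countable.

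Next I would verify that $\mathcal{V}$ is a $\pi$-network. Any non-empty basic open set in $\Pi$ restricts finitely many coordinates $\alpha_1,\ldots,\alpha_m$ to open intervals; since the chosen indexing set is dense in $(0,1)$, one can separate these finitely many coordinates into disjoint rational open intervals of $(0,1)$, each containing exactly one $\alpha_j$, thereby producing a member of $\mathcal{T}$ and a matching choice of rational target intervals $J_i$. The resulting $V\in\mathcal{V}$ is contained in the given basic open set, which establishes the $\pi$-network property.

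The final step, and the one requiring the most care, is to show that each $V\in\mathcal{V}$ contains a linearly independent subset of cardinality at least $\cont$. The key observation is that $V$ constrains only the coordinates lying in the finite union $A_1\cup\cdots\cup A_n$, so $V$ contains a full copy of $\R^{\kappa\setminus(A_1\cup\cdots\cup A_n)}$ (fixing the constrained coordinates at any chosen values in the respective $J_i$). Since $\kappa$ is infinite while $A_1\cup\cdots\cup A_n$ is a finite union of sets that each meet $\kappa$ in at most countably many points, the complementary index set still has cardinality $\kappa$, so in particular $V$ contains a topological copy of $\R^\omega$. It therefore suffices to produce $\cont$ linearly independent vectors inside $\R^\omega$: one can take the family $\{v_t: t\in(0,1)\}$ of geometric-type sequences $v_t=(1,t,t^2,t^3,\ldots)$, whose linear independence follows from the nonvanishing of Vandermonde determinants. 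Transplanting these into the free coordinates of $V$ (and padding the constrained coordinates with the fixed values) yields $\cont$ linearly independent elements of $V$, completing the proof. The main subtlety is handling the affine shift caused by the nonzero fixed values on the constrained coordinates; this is harmless because adding a single fixed vector to $\cont$ linearly independent vectors preserves independence after, if necessary, passing to pairwise differences, which still leaves a linearly independent family of size $\cont$.
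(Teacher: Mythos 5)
Your definition of $\mathcal{V}$ and your verification of the $\pi$-network property coincide with the paper's; the problem is in the final step. Your argument rests on the claim that the set of unconstrained coordinates $\kappa\setminus(A_1\cup\cdots\cup A_n)$ is infinite (indeed of cardinality $\kappa$), justified by asserting that each $A_i$ meets $\kappa$ in at most countably many points. Both assertions fail. Each block $A_i=A_i'\cap\kappa$ is infinite, because $\kappa$ is dense in $(0,1)$\,---\,this is precisely what makes $\mathcal{V}$ countable even though $\R^\kappa$ has no countable family of \emph{basic open} sets when $\kappa>\omega$\,---\,and for uncountable $\kappa$ the traces of $\kappa$ on rational intervals cannot all be countable (otherwise $\kappa$ would be a countable union of countable sets). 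Worse, the union $A_1'\cup\cdots\cup A_n'$ may cover $(0,1)$ up to finitely many points: $A=(0,1/2)\cup(1/2,1)$ is a legitimate disjoint finite union of rational intervals, so the corresponding sets $V$ constrain all coordinates except possibly the single point $1/2$ (and literally all of them if $1/2\notin\kappa$). For such $V\in\mathcal{V}$ there is no copy of $\R^\omega$ sitting on free coordinates, and your construction of the independent family produces nothing, so the lemma is not proved for every member of $\mathcal{V}$.

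This is exactly the point where the paper's proof proceeds differently: it never uses free coordinates. Since each coordinate of a point of $V$ ranges over a set with nonempty interior (either $\R$ or a nondegenerate interval), $V$ contains, on any countably many coordinates, a product of nondegenerate closed segments with the remaining coordinates held fixed\,---\,an affine copy of $[0,1]^\omega$\,---\,and inside it the $\{0,1\}$-valued vectors $x_t$ attached to an almost disjoint family $\{N_t: t\in I\}$ with $|I|=\cont$ are linearly independent. Alternatively, you could salvage your route by pruning $\mathcal{T}$: require the closure of $A$ to omit some subinterval of $(0,1)$; the $\pi$-network verification survives (choose the separating rational intervals small enough), and then the free coordinate set contains $\kappa\cap I$ for some interval $I$, hence is infinite, and your Vandermonde vectors work. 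One further caution: your remark about \lq\lq{}passing to pairwise differences\rq\rq{} is not a valid repair, since differences of elements of $V$ need not lie in $V$, while the lemma demands an independent subset \emph{of} $V$. The correct general repair is that the $\cont$ shifted vectors span a subspace of dimension $\cont$ and hence contain a linearly independent subfamily of size $\cont$ inside $V$; in your disjoint-support situation no repair is needed at all, because restricting a vanishing linear combination of the vectors $c+v_t$ to the free coordinates already forces all coefficients to vanish.
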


\begin{proof} 
Identify $\kappa$ with a dense subset of the open interval $(0,1)$ and fix two countable 
families $\mathcal{B}$ and $\mathcal{T}$, where $\mathcal{B}$ consists of open intervals 
with rational end-points in $\R$ and $\mathcal{T}$ consists of the sets of the form 
$A\cap \kappa$, with $A$ being a disjoint finite union of open intervals with rational 
end-points in $(0,1)$. For every set $A_1\cup A_2\cup\cdots \cup A_n \in \mathcal{T}$ 
and a finite collection $\{B_1, B_2,\ldots, B_n\}$, where each $B_i \in \mathcal{B}$, we define the set
$$
V= \{x\in \Pi: x(\alpha) \in B_i \,\, \mbox{for each} \,\, \alpha \in A_i\}.
$$
It is easy to verify that the family $\mathcal{V}$ consisting of all such sets $V$ is a 
countable $\pi$-network for the space $\Pi$. 

To finish the proof we make use of an idea from \cite{Lacey}. Consider an arbitrary 
element $V\in\mathcal{V}$. Since $\kappa$ is infinite there exists a subset $W \subset V$ 
which is linearly isomorphic to the countable product $[a,b]^{\omega}$, where $[a,b]$ 
is a closed segment in $\R$. Without loss of generality we can assume that $[a,b] = [0,1]$. 
It suffices to find  a linearly independent subset of size $\cont$ in $W = [0,1]^{\omega}$ 
considered as a linear subspace of $\mathbb{R}^\omega$.  Let $\{N_t: t \in I\}$ be an 
almost disjoint family consisting of infinite subsets of $\omega$ and such that $|I| = \cont$. 
For every $t \in I$, let $x_t$ be an element of $W$ which is defined by $x_t(n) = 1$ if 
$n \in N_t$ and $x_t(n) = 0$ otherwise. It is easy to see that the family 
$\{x_t: t \in I\} \subset W$ is linearly independent.
\end{proof}

\begin{proposition}\label{pro:Lin}
Let $\kappa$ be an infinite cardinal with $\kappa\leq\cont$ and $L,S$ be vector subspaces
of $\mathbb{R}^\kappa$. If $L$ is closed, $\ldim(L)\geq\omega$ and $\ldim(S) < \cont$, then 
$L$ contains a dense vector subspace $M$ such that $\ldim(M) = \omega$ and $M\cap S=\{0\}$.
\end{proposition}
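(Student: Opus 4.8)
The plan is to reduce the statement to the special case $L=\R^\mu$ and then to imitate the proof of Proposition~\ref{Pro:1}, using Lemma~\ref{prop:pinet} in place of Lemma~\ref{pinet_groups}. The reduction rests on the structural fact that \emph{every closed vector subspace of $\R^\kappa$ is linearly homeomorphic to $\R^\mu$ for some cardinal $\mu\leq\kappa$}. This is the precise analogue of the fact, used in the proof of Theorem~\ref{Th:Gr}, that every compact Boolean group is topologically isomorphic to some $\Z(2)^\lambda$, and it follows from the duality between $\R^\kappa$ and its topological dual $\R^{(\kappa)}=\bigoplus_{\alpha<\kappa}\R$ (finitely supported functions). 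Indeed, the product topology on $\R^\kappa$ coincides with the weak topology $\sigma(\R^\kappa,\R^{(\kappa)})$, so a closed subspace $L$ is weakly closed, whence $L=(L^\perp)^\perp$ with $L^\perp\subset\R^{(\kappa)}$. Therefore $L$ is exactly the space of all linear functionals on the quotient $\R^{(\kappa)}/L^\perp$, carrying the topology of pointwise convergence; since $\R^{(\kappa)}/L^\perp$ is an abstract real vector space of some dimension $\mu$, this gives $L\cong\R^\mu$. As $\ldim(L)\geq\omega$, the cardinal $\mu$ is infinite, and plainly $\omega\leq\mu\leq\kappa\leq\cont$.

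Having fixed a linear homeomorphism $\phi\colon L\to\R^\mu$, I would pass to $S_0=S\cap L$, noting $\ldim(S_0)\leq\ldim(S)<\cont$. Since $M\subseteq L$ forces $M\cap S=M\cap S_0$, it suffices to produce a dense vector subspace $M'$ of $\R^\mu$ with $\ldim(M')=\omega$ and $M'\cap\phi(S_0)=\{0\}$: then $M=\phi^{-1}(M')$ is the required subspace of $L$, because $M\cap S_0=\phi^{-1}\big(M'\cap\phi(S_0)\big)=\{0\}$. Writing $S'=\phi(S_0)$, so that $\ldim(S')<\cont$, I would apply Lemma~\ref{prop:pinet} to $\R^\mu$ to obtain a countable $\pi$-network $\mathcal{V}=\{V_n:n\in\omega\}$ for $\R^\mu$ such that each $V_n$ contains a linearly independent subset of cardinality at least $\cont$.

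The main construction is then a recursion of length $\omega$ paralleling Proposition~\ref{Pro:1}, but with dimension counting replacing cardinality counting (this substitution is forced, since every nonzero subspace of a real vector space already has cardinality $\cont$, so cardinality alone cannot separate $M'$ from $S'$). Suppose $x_0,\dots,x_{n-1}$ have been chosen and put $T_n=S'+\hull{x_0,\dots,x_{n-1}}$; then $\ldim(T_n)<\cont$. Because $V_n$ contains at least $\cont$ linearly independent vectors, $V_n\not\subseteq T_n$ (otherwise $\ldim(T_n)\geq\cont$), so one may choose $x_n\in V_n\setminus T_n$. Set $M'=\hull{x_n:n\in\omega}$. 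The choice $x_n\notin\hull{x_0,\dots,x_{n-1}}$ makes $\{x_n:n\in\omega\}$ linearly independent, so $\ldim(M')=\omega$; and if a nonzero $v=\sum_{i\leq n}c_ix_i$ with $c_n\neq0$ lay in $S'$, then $x_n=c_n^{-1}(v-\sum_{i<n}c_ix_i)\in T_n$, contradicting the choice of $x_n$, whence $M'\cap S'=\{0\}$. Finally, since $x_n\in V_n$ for every $n$ and $\mathcal{V}$ is a $\pi$-network, each non-empty open subset of $\R^\mu$ contains some $V_n$ and hence some $x_n$; thus $\{x_n:n\in\omega\}$ is dense in $\R^\mu$ and so is $M'$. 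Transporting through $\phi$ finishes the argument.

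I expect the main obstacle to be the structural reduction $L\cong\R^\mu$; the recursion itself is routine once Lemma~\ref{prop:pinet} supplies $\pi$-network elements of full linear size. The two points demanding care are that the correct invariant to track is the linear dimension rather than the cardinality of the forbidden set, and that the homeomorphism $\phi$ transfers the separation requirement faithfully, which is guaranteed by working with $S_0=S\cap L$ rather than with $S$ itself.
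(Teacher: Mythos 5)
Your proposal is correct and follows essentially the same path as the paper: reduce to $L\cong\mathbb{R}^\mu$ (the paper simply cites \cite[Corollary~2.6.5]{CB} where you sketch the duality proof), then run the recursion over the $\pi$-network of Lemma~\ref{prop:pinet}, choosing $x_n\in V_n$ outside the span of $S$ and the previously chosen points, which is possible by comparing $\ldim$ with $\cont$. The paper's proof is the same construction, so there is nothing to add.
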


\begin{proof}
Every closed vector subspace of 
$\mathbb{R}^\kappa$ is topologically isomorphic to $\mathbb{R}^\lambda$ with 
$\lambda\leq\kappa$ (see \cite[Corollary~2.6.5]{CB}). Hence we can assume without 
loss of generality that $L=\mathbb{R}^\lambda$ and that $S$ is a vector subspace of $L$.

Fix a countable $\pi$-network $\mathcal{V} = \{V_n: n\in\omega\}$ for $L$ 
as in Lemma~\ref{prop:pinet}. Since $\ldim V_0\geq\cont$
we can find an element $x_0\in V_0\setminus S$. By induction,
if we have defined elements $x_0,\ldots,x_{n-1}$ in $L$ with $x_i\in V_i$ for
each $i\leq n-1$, then we denote by  $S_{n}$ the minimal vector subspace 
of $L$ containing $S$ and the elements $x_0,\ldots,x_{n-1}$. It is clear that 
$\ldim(S_{n})<\cont$, so there exists $x_{n}\in V_{n}\setminus S_{n}$.
The set $X_n = \{x_0,\ldots,x_{n}\}$ generates a vector subspace $M_n$ of $L$.
Finally, we define the set $X=\{x_n: n\in\omega\}$. Let $M$ be the linear subspace 
of $L$ generated by $X$. Then $M$ is dense in $L$ since $X$ is dense in $L$.
Also, $M$ and $S$ have trivial intersection since each $M_{n}$ has trivial intersection 
with $S$, and $M=\bigcup_{n\in\omega} M_n$.
\end{proof}

Now we are in a position to present the main result of this section.

\begin{theorem}\label{Th:LCS}
Assume that $2^{\omega_1}=\cont$. Then there exist pseudocomplete locally convex 
spaces $K$ and $L$ such that all closed vector subspaces of $K$ and $L$ are separable, 
but the product $K\times L$ contains a closed non-separable $\sigma$-compact vector 
subspace $M$. 
\end{theorem}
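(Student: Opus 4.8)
The plan is to mirror the construction used in Theorem~\ref{Th:Gr}, replacing the compact Boolean group $\Pi=\mathbb{Z}(2)^{\omega_1}$ with the locally convex space $\Pi=\R^{\omega_1}$, and replacing Proposition~\ref{Pro:1} with its linear analogue Proposition~\ref{pro:Lin}. First I would build $K$ and $L$ as dense vector subspaces of $\Pi=\R^{\omega_1}$ satisfying $K\cap L=\sigma$, where
$$
\sigma=\{x\in\Pi: |\supp(x)|<\omega\}
$$
is the $\Sigma$-product type subspace of vectors with finite support. As in the group case, $\sigma$ is a $\sigma$-compact, non-separable vector subspace of $\Pi$ (it is a countable union of the finite-dimensional, hence compact-generated, coordinate subspaces, and it is not separable since its weight is $\omega_1$). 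The diagonal $\Delta=\{(x,x):x\in\Pi\}$ is a closed vector subspace of $\Pi\times\Pi$, so $\Delta\cap(K\times L)$ is a closed non-separable $\sigma$-compact vector subspace of $K\times L$ linearly isomorphic to $\sigma$; this is the desired subspace $M$.

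The heart of the argument is a recursion of length $\cont$ producing increasing chains of vector subspaces $K_\alpha,L_\alpha$ of $\Pi$. Using $2^{\omega_1}=\cont$, I would enumerate all infinite-dimensional closed vector subspaces of $\Pi$ as $\{C_\alpha:\alpha<\cont\}$ and enumerate $\Sigma=\bigcup\{\R^B: B\subset\omega_1,\ 1\leq|B|\leq\omega\}$ as $\{b_\alpha:\alpha<\cont\}$, picking a countable $B_\alpha\subset\omega_1$ with $b_\alpha\in\R^{B_\alpha}$. Starting from $K_0=L_0=\sigma$, at successor stage $\alpha=\nu+1$ I would apply Proposition~\ref{pro:Lin} with $L=C_\nu$ and $S=K_\nu+L_\nu$ (whose linear dimension stays below $\cont$ by the cardinality bound $|K_\nu|,|L_\nu|\leq|\nu|\cdot\omega_1$) to obtain a countable-dimensional dense subspace $M_\nu$ of $C_\nu$ with $M_\nu\cap S=\{0\}$. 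Setting $K'_\nu=K_\nu+M_\nu$ and $L'_\nu=L_\nu+M_\nu$ keeps the intersection equal to $\sigma$ because $M_\nu\cap(K_\nu+L_\nu)=\{0\}$ forces $K'_\nu\cap L'_\nu=K_\nu\cap L_\nu$. Then, since the fiber $P_\nu=\{x\in\Pi:\pi_{B_\nu}(x)=b_\nu\}$ has cardinality $\cont$ while $|K'_\nu|\cdot|L'_\nu|<\cont$, I can pick $x_\nu\in P_\nu\setminus(K'_\nu+L'_\nu)$, set $K_\alpha=K'_\nu+\langle x_\nu\rangle$, then choose $y_\nu\in P_\nu\setminus(K_\alpha+L'_\nu)$ and set $L_\alpha=L'_\nu+\langle y_\nu\rangle$, preserving $K_\alpha\cap L_\alpha=\sigma$ and securing $b_\nu\in\pi_{B_\nu}(K_\alpha)\cap\pi_{B_\nu}(L_\alpha)$. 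At limit stages I take unions. The five bookkeeping conditions (chain, cardinality, intersection equal to $\sigma$, hitting each $b_\gamma$ in both projections, and meeting each $C_\gamma$ in a dense countable-dimensional subspace) are maintained exactly as in Theorem~\ref{Th:Gr}.

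With $K=\bigcup_{\alpha<\cont}K_\alpha$ and $L=\bigcup_{\alpha<\cont}L_\alpha$, the separability of all closed vector subspaces follows as before: an infinite-dimensional closed subspace $F$ of $K$ has closure $\overline{F}=C_\alpha$ for some $\alpha$, and condition (v) guarantees that $K\cap C_\alpha=F$ contains a dense countable-dimensional subspace, hence a countable dense set, so $F$ is separable (finite-dimensional closed subspaces are trivially separable). The remaining point is pseudocompleteness. Because condition (iv) forces $\pi_B(K)=\pi_B(L)=\R^B$ for every countable $B\subset\omega_1$, both $K$ and $L$ fill all countable faces of $\Pi=\R^{\omega_1}$, so Theorem~\ref{projections2}, applied with the regular, pseudocomplete, first-countable factors $X_\alpha=\R$, yields that $K$ and $L$ are pseudocomplete. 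I expect the main obstacle to be verifying cleanly that the dimension of $S=K_\nu+L_\nu$ remains strictly below $\cont$ throughout the recursion (so that Proposition~\ref{pro:Lin} applies) and that adjoining the single elements $x_\nu,y_\nu$ does not disturb the equality $K_\alpha\cap L_\alpha=\sigma$; both reduce to the arithmetic that $|\nu|\cdot\omega_1<\cont$ for $\nu<\cont$ and to the standard fact that if $v\notin A+B$ then $(A+\langle v\rangle)\cap B=A\cap B$, but these must be checked with linear (rather than merely group-theoretic) care since $\langle v\rangle$ is now a one-dimensional subspace over $\R$.
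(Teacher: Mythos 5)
Your strategy is exactly the paper's: build $K,L$ inside $\mathit\Pi=\R^{\omega_1}$ with $K\cap L$ equal to the finite-support subspace, run the length-$\cont$ recursion with conditions (i)--(v) from Theorem~\ref{Th:Gr}, use Proposition~\ref{pro:Lin} at successor stages, and get pseudocompleteness from Theorem~\ref{projections2}. However, there is a genuine gap, and it occurs precisely at the point where the linear case differs from the Boolean-group case. You transplant the cardinality bookkeeping of Theorem~\ref{Th:Gr} verbatim: the bound $|K_\nu|,|L_\nu|\le|\nu|\cdot\omega_1$ and the selection step ``$|P_\nu|=\cont$ while $|K'_\nu|\cdot|L'_\nu|<\cont$, so pick $x_\nu\in P_\nu\setminus(K'_\nu+L'_\nu)$.'' These cardinality bounds are impossible for real vector spaces: every nonzero vector subspace of $\R^{\omega_1}$ contains a line and hence has cardinality at least $\cont$ (already $K_0=\sigma$ has cardinality $\cont$), so $|K'_\nu|\cdot|L'_\nu|<\cont$ never holds and the comparison with $|P_\nu|=\cont$ proves nothing. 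This is exactly why the paper's condition (ii) is stated in terms of $\ldim$ rather than cardinality. The repair is to keep $\ldim(K_\alpha),\ldim(L_\alpha)\le|\alpha+1|\cdot\omega_1<\cont$ throughout and to justify the choice of $x_\nu$ by a dimension argument: $P_\nu$ is a translate of $\ker\pi_{B_\nu}\cong\R^{\omega_1\setminus B_\nu}$, which contains a copy of $\R^{\omega}$ and so has linear dimension at least $\cont$; if $P_\nu\subseteq K'_\nu+L'_\nu$, then fixing any $x^*\in P_\nu$ we would get $\ker\pi_{B_\nu}=P_\nu-x^*\subseteq K'_\nu+L'_\nu$, contradicting $\ldim(K'_\nu+L'_\nu)<\cont$. (This is the same dimension-versus-cardinality point that makes Proposition~\ref{pro:Lin} require $\pi$-network elements containing linearly independent sets of size $\cont$, not merely sets of cardinality $\cont$.)

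Two smaller inaccuracies in your preamble should also be fixed. First, $\sigma$ is not ``a countable union of finite-dimensional coordinate subspaces'': there are $\omega_1$ many finite supports, and $\R^F$ is not compact in any case; $\sigma$-compactness follows instead by writing $\sigma=\bigcup_{n\in\omega}A_n$ where $A_n=\{x:|\supp(x)|\le n,\ \|x\|_\infty\le n\}$, each $A_n$ being closed in the compact cube $[-n,n]^{\omega_1}$. Second, non-separability of $\sigma$ does not follow from ``weight $\omega_1$''\,---\,the compact group $\Z(2)^{\omega_1}$ has weight $\omega_1$ and is separable by Theorem~\ref{HMP}; rather, any countable $D\subset\sigma$ has its supports inside a countable set $B\subset\omega_1$, whence $\ovl{D}\subseteq\R^B\times\{0\}\subsetneq\sigma$.
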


\begin{proof}
Our construction of $K,L$ and $M$ is similar in spirit to the one presented in the proof 
of Theorem~\ref{Th:Gr}. Let $\mathit{\Pi}=\mathbb{R}^{\omega_1}$ be the product space 
with the usual Tychonoff topology. Clearly $\mathit\Pi$ is a lcs of weight $\omega_1$.
Let $M_0=\sigma\mathit\Pi$ be the $\sigma$-product lying in $\mathit\Pi$, i.e.~the vector 
subspace of $\mathit\Pi$ consisting of all elements which differ from zero element in at 
most finitely many coordinates. One can easily verify that the topological space $M_0$ 
is a $\sigma$-compact and non-separable.

Our aim is to construct two linear subspaces $K$ and $L$ of $\mathit\Pi$ 
satisfying $K\cap L=M_0$. Let $\Delta=\{(x,x): x\in\mathit\Pi\}$ be the diagonal 
in $\mathit\Pi\times\mathit\Pi$. Then $M_0$ is naturally identified, algebraically 
and topologically, with the closed subspace $M=\Delta\cap (K\times L)$ of $K\times L$. 
Since the space $\mathit\Pi$ is locally convex, so are the linear subspaces $K$ and 
$L$ of $\mathit\Pi$.

According to \cite[Corollary~2.6.5]{CB} every non-trivial closed vector subspace 
$C$ of $\mathit\Pi=\mathbb{R}^{\omega_1}$ is isomorphic to $\mathbb{R}^\lambda$, 
where $1\leq \lambda\leq\omega_1$, so $C$ is separable. Since $w(\mathit\Pi)=\omega_1$, 
the assumption $2^{\omega_1}=\cont$ implies that the space $\mathit\Pi$ contains at most 
$\cont$ closed subsets. We enumerate all closed infinite-dimensional vector subspaces 
of $\mathit\Pi$, say, $\{C_\alpha: \alpha<\cont\}$. Also, let $\{b_\alpha: \alpha<\cont\}$ be an 
enumeration of the set $\bigcup\{\mathbb{R}^A: A\subset\omega_1,\ 1\leq |A|\leq\omega \}$.

We put $K_0=L_0=M_0$. Then $\ldim(K_0)=\ldim(L_0)=\omega_1$. Following the lines 
of the proof of Theorem~\ref{Th:Gr} and applying Proposition~\ref{pro:Lin}, one can 
construct families $\{K_\alpha: \alpha<\cont\}$ and $\{L_\alpha: \alpha<\cont\}$ of vector
subspaces of $\mathit\Pi$ satisfying the following conditions for all ordinals $\alpha,\beta$ 
with $\alpha<\beta<\cont$:
\begin{enumerate}
\item[(i)]   $K_\alpha\subset K_\beta$ and $L_\alpha\subset L_\beta$;
\item[(ii)]  $\ldim(K_\alpha)\leq |\alpha+1|\cdot\omega_1$ and $\ldim(L_\alpha)\leq |\alpha+1|   
                \cdot\omega_1$; 
\item[(iii)] $K_\alpha\cap L_\alpha=M_0$;
\item[(iv)] $b_\alpha\in \pi_{\beta}(K_\beta)\cap \pi_{\beta}(L_\beta)$, where $\pi_\beta$
                is the projection of $\mathbb{R}^{\omega_1}$ onto $\mathbb{R}^\beta$;
\item[(v)]  both $K_\beta\cap C_\alpha$ and $L_\beta\cap C_\alpha$ contain a dense 
                separable subspace of $C_\alpha$.
\end{enumerate}
Once the families $\{K_\alpha: \alpha<\cont\}$ and $\{L_\alpha: \alpha<\cont\}$
have been defined, we put $K=\bigcup_{\alpha<\cont} K_\alpha$ and 
$L=\bigcup_{\alpha<\cont} L_\alpha$. Then, by (i) and (iii), the linear subspaces $K$ 
and $L$ of $\mathit\Pi$ satisfy $K\cap L=M_0$. We claim that all closed vector 
subspaces of $K$ and $L$ are separable. For instance, let $F$ be a closed 
vector subspace of $K$. If $F$ is finite-dimensional, then it is evidently separable. Hence
we can assume that $F$ is infinite-dimensional.  The closure of $F$ in $\mathit\Pi$, say, 
$\overline{F}$ is a closed infinite-dimensional vector subspace of $\mathit\Pi$,
so $\overline{F} =C_\alpha$ for some $\alpha<\cont$. By (v), $K_{\alpha+ 1}\cap C_\alpha$ 
contains a dense separable subspace of $C_\alpha$, say, $Q$. Hence $Q$ is dense in 
$F=\overline{F}\cap K=K\cap C_\alpha$, i.e.~$F$ is separable. The same argument
shows that all closed vector subspaces of $L$ are separable as well. 

It follows from (iv) that $\pi_\beta(K)=\pi_\beta(L)=\mathbb{R}^\beta$ for each 
$\beta<\omega_1$. Therefore, by Theorem~\ref{projections2}, $K$ and $L$ are 
pseudocomplete spaces, as claimed. 
\end{proof}

\begin{remark}
{\rm
Since $M_0$ is dense in $\mathit\Pi$, every compact subset of $M_0$ is nowhere dense in 
$\mathit\Pi$ and in $M_0$. Hence the closed vector subspace $M$ of the product $K\times L$ 
in Theorem~\ref{Th:LCS} is not Baire. Similarly, the closed subgroup $\Delta\cap (G\times H)$ 
of $G\times H$ in Theorem~\ref{Th:Gr} is not Baire either.} 
\end{remark}

We finish the article with a question that remains open. 

\begin{problem}\label{Prob:L1}
Are Theorems~\ref{Th:Gr} and~\ref{Th:LCS} valid in ZFC alone?
\end{problem}


\end{document}